\newtheorem{theorem}{Theorem}[section]
\newtheorem{lemma}[theorem]{Lemma}
\newtheorem{proposition}{Proposition}[section]
\theoremstyle{definition}
\newtheorem{definition}{Definition}[section]
\theoremstyle{remark}
\newtheorem{remark}{Remark}[section]
\newcommand{\dd}{\mathsf{d}}
\begin{document}

\title{Extremal values for Steiner distances and the Steiner $k$-Wiener index}

\author{Hua Wang\footnote{Georgia Southern University, Statesboro, GA 30460, USA {\tt hwang@georgiasouthern.edu}}, Andrew Zhang\footnote{(Corresponding author) Wayzata High School, Plymouth, MN 55446, USA \\ {\tt andrewzhang05719@gmail.com}}}
\date{ }
\maketitle
	
%\maketitle
\begin{abstract}

Various questions related to distances between vertices of simple, finite graphs are of interest to extremal graph theorists. The Steiner distance of a set of $k$ vertices is a natural generalization of the regular distance. We extend several theorems on the middle parts and extremal values of trees from their regular distance variants to their Steiner distance variants. More specifically, we show that for a tree $T$, the Steiner $k$-distance, Steiner $k$-leaf-distance, and Steiner $k$-internal-distance are all concave along a path. We also calculate distances between the Steiner $k$-median, Steiner $k$-internal-median, and Steiner $k$-leaf-median. Letting the Steiner $k$-distance of a vertex $v \in V(T)$ be $\dd_k^T(v)$, we find bounds based on the order of $T$ for the ratios $\frac{\dd^T_{k}(u)}{\dd^T_{k}(v)}$, $\frac{\dd^T_{k}(w)}{\dd^T_{k}(z)}$, and $\frac{\dd^T_{k}(u)}{\dd^T_{k}(y)}$ where $u$ and $v$ are leaves, $w$ and $z$ are internal vertices, and $y$ is a Steiner $k$-centroid. Also, denoting the Steiner $k$-Wiener index as $\mathsf{SW}_k(T)$, we find upper and lower bounds for $\frac{\mathsf{SW}_k(T)}{\dd^G_{k}(v)}$. The extremal graphs that produce these bounds are also presented.

%These work generalizes previous results on regular distances and their ratios, established in [Centroid, leaf-centroid, and internal-centroid. Graphs and Combinatorics, 31(3): 783--
%793, May 2015.] and [Extremal values for ratios of distances in
%trees. Discrete Applied Mathematics, 80(1): 37--56, 1997.]

%Since various structures can be modeled using graphs, our results can help guide the creation of networks that maximize the efficacy of central facilities.     
\end{abstract}

%%Research highlights
%\begin{highlights}
%\item Research highlight 1
%\item Research highlight 2
%\end{highlights}

%% The Appendices part is started with the command \appendix;
%% appendix sections are then done as normal sections

%% If you have bibdatabase file and want bibtex to generate the
%% bibitems, please use
%%

%% else use the following coding to input the bibitems directly in the
%% TeX file.

% \begin{thebibliography}{00}

% %% \bibitem{label}
% %% Text of bibliographic item

% \bibitem{}

% \end{thebibliography}

%\onehalfspacing

%\title{Extremal values for Steiner distances and the Steiner $k$-Wiener index}
%\author[]{}
%\author[2]{Hua Wang}

%\affil[]{Wayzata High School, United States}
%\affil[2]{Georgia Southern University, United States }
%\date{October 2022}

\section{Introduction}
Distance-based topological indices are one of the best known graph invariants studied. For two vertices in a graph, the distance is the number of edges in the shortest path between them. The Steiner distance, defined in \cite{Chartrand1989}, generalizes this concept of distance as it is the size of the minimal graph that connects a set of vertices, rather than just a pair. Many questions related to the graph structure or extremal problems can be generalized from regular distances to the Steiner distances.

\subsection{Middle parts}

These variants of distance can be used to define several indices that determine ``middle parts" of graphs, including the eccentricity, center, and centroid. For a given vertex, its eccentricity is its distance from the vertex furthest from it. The eccentricity can be used as a measure of how centralized a vertex is within a graph and has been investigated in \cite{dankemann04}. Intuitively, the center is defined as the subgraph formed by the vertices with minimal eccentricity. However, while the center is a useful indication of the centralized vertices in a graph, it only considers a vertex's distance to one other vertex. The centroid, introduced in \cite{Jordan1869}, is an alternative method for describing centralized vertices that considers a vertex's distance in relation to all other vertices in the graph. In \cite{Wang2015}, new variants of the centroid including the leaf-centroid and internal-centroid are introduced. We generalize those concepts through the Steiner distance, and prove that for any three vertices $x, y, z$ of a tree $T$ with $xy, yz \in E(T)$, we must have $ 2\dd_k(y) \leq \dd_k(x) + \dd_k(z)$, $2\mathsf{dL}_k(y) \leq \mathsf{dL}_k(x) + \mathsf{dL}_k(z)$, and $2\mathsf{dI}_k(y) \leq \mathsf{dI}_k(x) + \mathsf{dI}_k(z)$. Detailed definition of notations will follow in the next section.

The Steiner distance can be used to define the Steiner $k$-centroid vertices, generalizing those from regular centroids. The Steiner $k$-median, which is the subgraph induced by the Steiner $k$-centroids, was first introduced in \cite{berineke1996}. In this paper, we generalize results in \cite{Wang2015} by analyzing Steiner $k$-leaf-centroids and Steiner $k$-internal-centroids. Additionally, we study the distance between the Steiner $k$-median, Steiner $k$-leaf-median, and Steiner $k$-internal-median. For instance, between the Steiner $k$-leaf-median and Steiner $k$-internal-median we have
\begin{equation*}\dd(u, v) \leq \max\{0, n + 1- \max \{k,3\} - \max\{k+1, \dd(u, v) + 2\}\}.
\end{equation*}
Similarly, we also have
\begin{equation*}
\dd(u, v) \leq \frac{n-2\max \{k,3\}+1}{2}
\end{equation*}
between the leaf-median and median, and
\begin{equation*}
  \dd(u,v) \leq \frac{n - 2 \max \{ k+1, \dd(u, v) + 2 \} + 1}{2}
\end{equation*}
between the internal-median and median.

\subsection{The Wiener index and extremal ratios}

The most well studied distance-based topological index is probably the Wiener index, introduced by Wiener in \cite{winer47} as the sum of distances between all pairs of vertices. The Steiner $k$-Wiener index is a natural generalization, defined using the Steiner $k$-distance rather than regular distance. The Steiner $k$-Wiener index has been investigated in \cite{gutman16, zhang19, Rasila21}.
The Steiner $k$-distance can also be used to define other global topological indices on an entire graph. An example is the average Steiner $k$-distance, which has been studied in \cite{Dankelmann96,Dankelmann97}. 

Analyzing the extremal ratios between functions defined on graph invariants lead to a number of interesting questions, studied in \cite{barefoot97,szekely2013,szekely14}. 
In particular, the ratios between the regular distance and the regular Wiener index have been studied in \cite{barefoot97}. These ratios provide a measure of the equality between vertices in a graph. Generalizing these results to those defined through Steiner distances, we stduy explicit bounds and extremal structures corresponding to the extremal ratios of the Steiner distance functions. Our results include the extremal ratio between 
a pair of leaves, between a pair of internal vertices, and between one centroid and one leaf. It is interesting to see that the extremal structures are often the so called ``comets'', similar to those found in previous studies with other functions.

The extremal ratio between the global function and local function has also been studied in \cite{barefoot97}, for distance functions. The study of such ratios were extended to those regarding the number of subtrees in \cite{szekely2013,szekely14}. We generalize these results to the Steiner $k-$distance and the Steiner $k$-Wiener Index in this paper, showing that $\displaystyle{ \frac{n}{k} \leq \frac{\mathsf{SW}_k(T)}{\dd_k(v)} \leq \frac{n-1}{k-1}}$.

\subsection{Organization of content}

We will primarily operate on trees. In Section \ref{Sec: prelims}, we formally define the notation used throughout this paper. In Section \ref{Sec:concavity}, we prove that along a path, the Steiner $k$-distance, Steiner $k$-leaf-distance, and Steiner $k$-internal-distance of more middle vertices is on average less than the Steiner $k$-distance of the vertices closer to the end of the path. Then, in Section \ref{Sec:centroids}, we find distances between the Steiner $k$-medians, Steiner $k$-leaf-medians, and Steiner $k$-internal-medians. Finally, in Section \ref{Sec:ratio}, we bound the ratio of the Steiner $k$-distances of a pair of leaves, a pair of internal vertices, and a pair of a leaf and a Steiner $k$-centroid. We also bound the ratio between the Steiner $k$-Wiener index and the Steiner $k$-distance of a Steiner $k$-centroid.

\section{Preliminaries}\label{Sec: prelims}

In this paper, we consider only simple, finite graphs. Although all definitions below apply to general graphs, in this paper we will only consider them in the context of trees. For a graph $G$, let the set of vertices of $G$ be $V(G)$ and the set of edges of $G$ be $E(G)$. Also, denote the set of leaves of $G$ as $L(G)$ and the set of internal vertices of $G$ as $I(G)$. For a subgraph $C$ of $G$, we define $L'_G(C)$ as the set of vertices in $C$ that are leaves in $G$ and $I'_G(C)$ as the set of vertices in $C$ that are internal vertices in $G$. When the graph $G$ is given by context, we simply write $L'(C)$ and $I'(C)$.

For a pair of vertices $u,v$ we denote a path between them as $P(u,v)$. The distance between a pair of vertices $u$ and $v$, denoted by $\dd(u,v)$, is the length of the shortest path between them. The Steiner distance of a set of vertices $S$ in $G$, denoted by $\dd_G(S)$, is the size of the smallest connected subgraph in $G$ containing $S$. This subgraph will always be a tree and is referred to as the Steiner tree of $S$. When the graph is given by context, we refer to $\dd_G(S)$ as $\mathsf{d}(S)$. 

Throughout this paper, unless otherwise noted, we let $n$ be the order of the graph under consideration and $k$ be an integer such that $2 \leq k \leq n$. 
The Steiner $k$-Wiener index is a natural generalization of the Wiener index.
\begin{definition}
For a graph $G$, 
the Steiner $k$-Wiener index is 
\begin{equation*}
\mathsf{SW}_k(G) =  \sum_{S\subset V(G),\vert S\vert =k}
\dd(S).     
\end{equation*}
\end{definition}

While the Stiener $k$-Wiener index can be considered as a global function on graphs, the corresponding local function is the Steiner $k$-distance of vertices.
\begin{definition}
For a graph $G$, the Steiner $k$-distance of a vertex $v \in V(G)$, denoted by $\dd_k^{G}(v)$, is the sum of the Steiner distances of all sets of $k$ vertices in $G$ containing $v$, 
\begin{equation*}
\displaystyle{\dd_k^G(v) = \sum_{S \subset V(G), \text{ } |S| = k-1, \text{ } v \notin S} \dd_G(\{v\} \cup S)}.
\end{equation*}
When the graph is given by context, we simply write $\dd_k(v)$.
\end{definition}

This definition easily generalizes to the ``leaf'' and ``internal'' versions of the same concept.

\begin{definition}\label{Def: Steiner k-leaf-distance}
For a graph $G$, the Steiner $k$-leaf-distance of a vertex $v \in V(G)$, denoted by $\mathsf{dL}_k^{G}(v)$, is 
\begin{equation*}
{\mathsf{dL}_k^{G}(v) = \sum_{S \subset L(G), \text{ } |S| = k-1} \dd_G(\{v\} \cup S)}.
\end{equation*}
When the graph is given by context, we simply write $\mathsf{dL}_k(v)$.
\end{definition}
\begin{definition}\label{Def: Steiner k-internal-distance}
For a graph $G$, the Steiner $k$-internal-distance of a vertex $v \in V(G)$, denoted by $\mathsf{dI}_k^G(v)$, is 
 \begin{equation*}
  \mathsf{dI}_k^G(v) = \sum_{S \subset I(G), \text{ } |S| = k-1} \dd_G(\{v\} \cup S). 
  \end{equation*}
When the graph is given by context, we simply write $\mathsf{dI}_k(v)$. 
\end{definition}
\begin{remark}
If a vertex $v$ is a leaf, for a set $S$ as in Definition \ref{Def: Steiner k-leaf-distance}, $|\{v\} \cup S|$ could be $k-1$. If a vertex $v$ is an internal vertex, for a set $S$ as in Definition \ref{Def: Steiner k-internal-distance}, $|\{v\} \cup S|$ could be $k-1$.
\end{remark}

As mentioned in the previous section, the middle parts of a graph are defined based on these local functions.
\begin{definition}
For a graph $G$, we have the following.
\begin{itemize}
    \item A Steiner $k$-centroid (vertex) is a vertex with minimal Steiner $k$-distance, and the Steiner $k$-median $M_k(G)$ is the subgraph induced by the Steiner $k$-centroids.
    \item A Steiner $k$-leaf-centroid (vertex) is a vertex with minimal Steiner $k$-leaf-distance, and the Steiner $k$-leaf-median $M^\ell_k(G)$ is the subgraph induced by the Steiner $k$-leaf-centroids.
    \item A Steiner $k$-internal-centroid (vertex) is a vertex with minimal Steiner $k$-internal-distance, and the Steiner $k$-internal-median $M^i_k(G)$ is the subgraph induced by the Steiner $k$-internal-centroids.
\end{itemize}
\end{definition}
 
We also introduce some technical notations to help us analyze the Steiner $k$-distances of specific vertices.
\begin{definition}
Let $\mathsf{d}_k(a_1,a_2, \dots, a_i; b_1,b_2, \dots, b_j)$ be the sum of the Steiner distances of the sets of $k$ vertices containing the vertices $a_1,a_2, \dots, a_i$ and not containing the vertices $b_1,b_2, \dots, b_j$. 
\end{definition}

\begin{definition}
Let $\mathsf{dL}_k(a_1,a_2, \dots, a_i; b_1,b_2, \dots, b_j)$ be the sum of the Steiner distances of the sets of $k$ vertices containing the vertices $a_1,a_2, \dots, a_i$, not containing the vertices $b_1,b_2, \dots, b_j$, and not containing any internal vertices besides possibly $a_1,a_2, \dots, a_i$. 
\end{definition}

\begin{definition}
Let $\mathsf{dI}_k(a_1,a_2, \dots, a_i; b_1,b_2, \dots, b_j)$ be the sum of the Steiner distances of the sets of $k$ vertices containing the vertices $a_1,a_2, \dots, a_i$, not containing the vertices $b_1,b_2, \dots, b_j$, and not containing any leaves besides possibly $a_1,a_2, \dots, a_i$. 
\end{definition}

To describe some extremal graphs in this paper, we define the $r$-comet as follows.
\begin{definition}\label{Def: r-comet and vertices}
A $r$-comet on $n$ vertices is a tree formed by attaching $n-r$ leaves onto an end vertex of a path of length $r-1$. On a $r$-comet, let $a$ denote one of the $n-r$ leaves, $b$ denote the leaf on the path of length $r-1$, $c$ denote the vertex adjacent to $b$, and $d$ denote the vertex on the path of length $r-1$ adjacent to $n-r$ leaves as shown in Figure \ref{Fig:comet}.
\begin{figure}[htb]
\begin{center}
%\begin{tikzpicture}[thick, scale=0.8]

\begin{tikzpicture}[scale=0.7,
    brace/.style={thick,decorate,
        decoration={calligraphic brace, amplitude=7pt,raise=0.5ex}}]

\draw [decorate,decoration={brace,amplitude=10pt},xshift=0pt,yshift=8pt]
(2.5,0) -- (8.5,0)node [black,midway,yshift=16pt] {
$r \ vertices$};

\draw (1,-1) -- (2.5,0) 
 node [draw=black, circle, fill=black, inner sep=2pt, pos=0]  {};
\draw (1,0) -- (2.5,0) 
node [draw=black, circle, fill=black, inner sep=2pt, pos=0]  {};
\draw (1,1) -- (2.5,0)
 node [draw=black, circle, fill=black, inner sep=2pt, pos=0]  {};
\draw  (2.5, 0) -- (4,0)
 node [draw=black, circle, fill=black, inner sep=2pt, pos=1]  {}
  node [draw=black, circle, fill=black, inner sep=2pt, pos=0]  {};

\draw [dashed](4,0) -- (5.5,0)
 node [draw=black, circle, fill=black, inner sep=2pt, pos=1]  {};

\draw [dashed](5.5,0) -- (7,0)
 node [draw=black, circle, fill=black, inner sep=2pt, pos=1]  {};
 \draw (7,0) -- (8.5,0)
 node [draw=black, circle, fill=black, inner sep=2pt, pos=1]  {};

  \node [left] at (1,1) {$a$};
   \node [below] at (2.5,0) {$d$};
 \node [below] at (7,-0.1) {$c$};
\node [below] at (8.5,0) {$b$};

\end{tikzpicture}
\end{center}
 \caption{A $r$-comet with vertices $a, b$, $c$, and $d$.}
 \label{Fig:comet}
\end{figure}
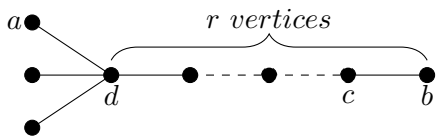
\end{definition}

 Throughout this paper, we use $G-v$ to denote the graph resulting from removing the vertex $v$ (and its adjacent edges) from $G$. Additionally, for $uv \in E(G)$, we define $G-uv$ to be the graph resulting from removing the edge $uv$ from $G$. For a vertex $x \in V(G)$, we generally use $C_x$ to denote the connected component containing $x$ after edge or vertex removals have disconnected $G$.

%In \cite{berineke1996} the Steiner $k$-distance where $k\geq 2$ of a vertex $v\in V(G)$ in a connected graph $G$ on $n\geq k$ vertices, denoted by $\dd_k(v)$ is defined by
%$\dd_k(v) = \displaystyle \sum_{S\subset V(G), \|S\| = k, v\in S } \dd_G(S)$.

\section{Concavity of Steiner Distances} \label{Sec:concavity}

A tree $T$ on $n$ vertices with $x,y,z \in V(T)$ such that $xy, yz \in E(T)$ can be represented as in Figure \ref{Figure: xyz Tree}. Let $C_x$, $C_y$, and $C_z$ be the connected components containing $x$, $y$, and $z$, respectively, in $T-xy-yz$. Furthermore, let $F_x = C_x - x$, 
$F_y = C_y - y$, and $F_z = C_z - z$. Finally, define $S_{a_1, \cdots, a_i; b_1, \cdots, b_j}^k(T)$ for $2 \leq k \leq n$ and $a_1, \cdots, a_i, b_1, \cdots, b_j \in \{ x,y,z \}$ to be the set containing all sets of $k$ vertices in $T$ that do not contain $x,y,z$ or any vertices in $ C_{b_1}, \cdots, C_{b_j}$ but do contain at least one vertex in each of $C_{a_1}, \cdots, C_{a_i}$. When the tree is given by context, we refer to $S_{a_1, \cdots, a_i; b_1, \cdots, b_j}^k(T)$ as 
$S_{a_1, \cdots, a_i; b_1, \cdots, b_j}^k$. We now prove that the Steiner $k$-distance, the Steiner $k$-leaf-distance, and the Steiner $k$-internal-distance are concave along a path.
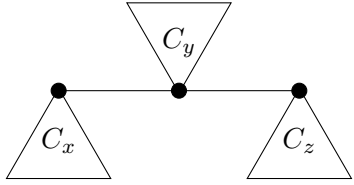
\begin{figure}[htp] 
    \centering
    \begin{center}
\begin{tikzpicture}[scale = 0.8]
\node [above] at (0,-1.2) {$C_x$};
\node [above] at (4,-1.2) {$C_z$};
\node [below] at (2,1.2) {$C_y$};
\draw(0,0) -- (2, 0) -- (4,0) 
 node [draw=black, circle, fill=black, inner sep=2pt, pos=0]  {}
  node [draw=black, circle, fill=black, inner sep=2pt, pos=1]  {}
   node [draw=black, circle, fill=black, inner sep=2pt, pos=-1]  {};
\node[isosceles triangle,isosceles triangle apex angle=60,
    draw,
    rotate=90,
    minimum size =1.2cm] (T1)at (0,-1){};
    \node[isosceles triangle,isosceles triangle apex angle=60,
    draw,
    rotate=270,
    minimum size =1.2cm] (T1)at (2,1){};
\node[isosceles triangle,isosceles triangle apex angle=60,
    draw,
    rotate=90,
    minimum size =1.2cm] (T1)at (4, -1){};

\end{tikzpicture}
\end{center}
    \caption{A tree with vertices $x,y,z$}
    \label{Figure: xyz Tree}
\end{figure}

\begin{proposition}
\label{Theorem: Steiner Distance Concavity}
    For a tree $T$ with vertices $x, y, z$ such that $xy, yz \in E(T)$ we have \begin{equation*}
    2\dd_k(y) \leq \dd_k(x) + \dd_k(z)
    \end{equation*}
    with equality if and only if $|V(F_y)| + |V(F_z)| < k-2$ and $|V(F_x)| + |V(F_y)| < k-2$
\end{proposition}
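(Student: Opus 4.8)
I would reduce the whole statement to one transparent identity for $\dd_k$ on a tree. The key fact is that, in a tree, the Steiner tree of a set $S$ consists of exactly those edges $e$ for which $S$ meets both components of $T-e$; hence $\dd(S)$ equals the number of such ``separating'' edges. Summing this over all $k$-subsets $W$ with $v\in W$ and interchanging the two summations, the contribution of a fixed edge $e$ becomes the number of such $W$ that $e$ separates, namely $\binom{n-1}{k-1}-\binom{|C_e(v)|-1}{k-1}$, where $C_e(v)$ denotes the vertex set of the component of $T-e$ containing $v$ and we adopt the convention $\binom{m}{k-1}:=0$ for $m<k-1$. Since $T$ has $n-1$ edges, this gives
\begin{equation*}
\dd_k(v)=(n-1)\binom{n-1}{k-1}-\sum_{e\in E(T)}\binom{|C_e(v)|-1}{k-1}.
\end{equation*}

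Substituting this into $\dd_k(x)+\dd_k(z)-2\dd_k(y)$, the constant terms cancel. For any edge $e$ off the path $x\,y\,z$, the vertices $x,y,z$ lie in a common component of $T-e$, so those summands cancel as well; only $e=xy$ and $e=yz$ survive. Writing $a=|V(F_x)|$, $b=|V(F_y)|$, $c=|V(F_z)|$ (so $|V(C_x)|=a+1$, $|V(C_z)|=c+1$, $n=a+b+c+3$), a short computation collapses the difference to
\begin{equation*}
\dd_k(x)+\dd_k(z)-2\dd_k(y)=\left[\binom{a+b+1}{k-1}-\binom{a}{k-1}\right]+\left[\binom{b+c+1}{k-1}-\binom{c}{k-1}\right].
\end{equation*}
Each bracket is $\ge 0$ because $\binom{m}{k-1}$ is nondecreasing in $m$ and $a+b+1>a$, $b+c+1>c$; this yields the inequality.

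For the equality case I would invoke the elementary fact that $\binom{m}{k-1}$ is \emph{strictly} increasing in $m$ once $m\ge k-1$ (legitimate since $k-1\ge 1$). Hence $\binom{a+b+1}{k-1}=\binom{a}{k-1}$ forces both sides to be $0$, i.e. $a+b+1\le k-2$, which for integers is exactly $|V(F_x)|+|V(F_y)|<k-2$; conversely that inequality makes the first bracket vanish. Symmetrically, the second bracket is $0$ iff $|V(F_y)|+|V(F_z)|<k-2$. As the difference is a sum of two nonnegative brackets, it is $0$ precisely when both conditions hold, which is the asserted characterization.

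\textbf{Main obstacle.} Once the edge-separation identity is set up there is no real computational difficulty; the step needing care is the equality analysis, where one must handle the degenerate binomial convention $\binom{m}{k-1}=0$ cleanly and use the strict monotonicity of binomial coefficients above the diagonal to exclude nontrivial coincidences. A more hands-on alternative would be to partition the $k$-subsets according to which of $C_x,C_y,C_z$ they meet---exactly the families $S^{k}_{a_1,\dots,a_i;b_1,\dots,b_j}(T)$ introduced above---and track how $\dd$ changes upon inserting $x$, $y$, or $z$ into a set of the remaining vertices; this also works, but the bookkeeping is heavier than the edge-counting route.
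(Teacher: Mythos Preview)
Your proof is correct and is a genuinely different (and shorter) route than the paper's. The paper proceeds exactly by the ``hands-on alternative'' you mention at the end: it partitions the $(k-1)$-subsets of $V(T)\setminus\{x,y,z\}$ according to which of $C_x,C_y,C_z$ they meet (the families $S^{k-1}_{\,\cdot\,;\,\cdot\,}$), compares $\dd(\{x\}\cup t)$, $\dd(\{y\}\cup t)$, $\dd(\{z\}\cup t)$ case by case, and then separately handles the sets containing two or three of $x,y,z$, assembling five or six partial equality conditions before observing that the two stated ones imply all the others. Your edge-separation identity collapses the whole difference to
\[
\dd_k(x)+\dd_k(z)-2\dd_k(y)=\Bigl[\tbinom{a+b+1}{k-1}-\tbinom{a}{k-1}\Bigr]+\Bigl[\tbinom{b+c+1}{k-1}-\tbinom{c}{k-1}\Bigr],
\]
from which both the inequality and the exact equality characterization are immediate. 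You could even shortcut your own derivation: the paper quotes (as the Beineke--Oellermann--Pippert theorem) the identity $\dd_k(x)-\dd_k(y)=\binom{n_{xy}(y)-1}{k-1}-\binom{n_{xy}(x)-1}{k-1}$ for adjacent $x,y$; applying it once to $xy$ and once to $yz$ and adding yields your displayed formula in one line, without first writing out the full edge-sum expression for $\dd_k(v)$. The one thing the paper's longer case analysis buys is reusability: the same template is recycled verbatim for the leaf- and internal-distance versions (the next two propositions), where no clean edge formula is available and the casework on whether $x$ or $z$ is a leaf/internal vertex is genuinely needed.
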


\begin{proof}
By definition, $\dd_k(x), \dd_k(y),$ and $\dd_k(z)$ each equal the sum of the Steiner distances of sets of $k$ vertices. We first consider the Steiner distance contributed toward these sums by sets containing exactly one of $x, y,$ or $z$.

Note that for each set $t \in S_{x;y,z}^{k-1}$, we have  $\dd(\{x \} \cup t)+ 2 = \dd(\{y \} \cup t) + 1 = \dd(\{z \} \cup t)$. Thus, $\dd(\{x \} \cup t) + \dd(\{z \} \cup t) = 2\dd(\{y \} \cup t)$. So,
\begin{equation*}
    \sum_{t \in S_{x;y,z}^{k-1} }\left( \dd(\{ x \} \cup t) + \dd (\{z \}  \cup t) \right) =  \sum_{t \in S_{x;y,z}^{k-1}}2\dd(\{y\}\cup t).
\end{equation*}
By symmetry,
\begin{equation*}
    \sum_{t \in S_{z;x,y}^{k-1} }\left( \dd(\{x\}  \cup t) + \dd (\{z\} \cup t)\right) =  \sum_{t \in S_{z;x,y}^{k-1}}2\dd(\{y\}\cup t).
\end{equation*}

For each set $t\in S_{y;x,z}^{k-1}$, we have $\dd(\{x \} \cup  t ) = \dd(\{y \} \cup t) + 1 = \dd(\{ z \} \cup t)$. So, $2\dd(\{ y \} \cup t) + 2 = \dd(\{x \} \cup t) + \dd(\{z \} \cup t)$. Since $|S_{y;x,z}^{k-1}| = \binom{|V(F_y)|}{k-1}$, 
\begin{equation*}
   \sum_{t\in S_{y;x,z}^{k-1} } \left(  \dd( \{ x \} \cup t) + \dd ( \{ z \} \cup t)\right) =  \sum_{t\in S_{y;x,z}^{k-1} } 2\dd(\{y \} \cup t) + 2 \binom{|V(F_y)|}{k-1}.
\end{equation*}
Thus, $\displaystyle{\sum_{t\in S_{y;x,z}^{k-1}} \left( \dd(\{x \} \cup t) + \dd (\{z \} \cup t)\right) =  \sum_{t\in S_{y;x,z}^{k-1} } 2\dd(\{y\} \cup t)}$ if and only if $|V(F_y)| < k - 1.$ 

For each set $t \in S_{x,y;z}^{k-1}$, we have $\dd(\{x \} \cup t) + 1 = \dd(\{y \} \cup t) + 1 = \dd(\{z \} \cup t)$. So, $2\dd(\{y \} \cup t) + 1 = \dd(\{x \} \cup t) + \dd(\{z \} \cup t)$. Since $|S_{x,y;z}^{k-1}|= \binom{|V(F_x)|+|V(F_y)|}{k-1} - \binom{|V(F_x)|}{k-1} - \binom{|V(F_y)|}{k-1}$, 
\begin{align*}
   &\sum_{t\in S_{x,y;z}^{k-1} } \left( \dd(\{x\} \cup t) + \dd (\{z\} \cup t) \right) = \\
     & \quad \quad \sum_{t\in S_{x,y;z}^{k-1} } 2\dd(\{y\} \cup t) + \binom{|V(F_x)|+|V(F_y)|}{k-1} - \binom{|V(F_x)|}{k-1} - \binom{|V(F_y)|}{k-1}.
\end{align*}
So, equality holds when $|V(F_x)| = 0, |V(F_y)| = 0,$ or $|V(F_x)|+|V(F_y)| < k-1$. By symmetry, 
\begin{align*}
  & \sum_{t\in S_{y,z;x}^{k-1} } \left(\dd(\{x\}  \cup t) + \dd (\{z\}  \cup t) \right)=  \\
  & \quad \quad \sum_{t\in S_{y,z;x}^{k-1} } 2\dd(\{y\} \cup t) + \binom{|V(F_y)|+|V(F_z)|}{k-1} - \binom{|V(F_y)|}{k-1} - \binom{|V(F_z)|}{k-1}.
\end{align*}
with equality when $|V(F_y)| = 0, |V(F_z)| = 0,$ or $|V(F_y)|+|V(F_z)| < k-1$. For each set $t \in S_{x,y,z;}^{k-1} \cup S_{x,z;y}^{k-1}$, we have that $\dd(\{x\} \cup t) = \dd(\{y\} \cup t) = \dd(\{z\} \cup t)$. Thus, 
\begin{equation*}
    \sum_{t \in S_{x,y,z;}^{k-1} \cup S_{x,z;y}^{k-1}}\left( \dd(\{x\}  \cup t) + \dd (\{z\} \cup t)\right) =  \sum_{t \in S_{x,y,z;}^{k-1} \cup S_{x,z;y}^{k-1}}2\dd(\{y\}  \cup t).
\end{equation*}

Now, consider the sets of $k$ vertices with more than one of $x,y,$ and $z$. Let $\dd'_k(x)$,  $\dd'_k(y)$, and $\dd'_k(z)$ be the sum of sets of $k$ vertices containing $x,y,$ and $z$, respectively, while also containing more than one of $x,y,$ and $z$. Note that $\dd'_k(x) = \dd_k(x,z;y) + \dd_k(x,y;z) + \dd_k(x,y,z;)$, $\dd'_k(y) = \dd_k(x,y;z) + \dd_k(y,z;x) + \dd_k(x,y,z;)$, and $\dd'_k(z) = \dd_k(x,z;y) + \dd_k(y,z;x) + \dd_k(x,y,z;)$. Thus, to prove that $2\dd'_k(y) \leq \dd'_k(x) + \dd_k'(z)$, it suffices to show $ \dd_k(x,y;z) + \dd_k(y,z;x) \leq 2\dd_k(x,z;y)$.

Note that $\dd_k(x,y;z) \leq \dd_k(x,z;y)$ and $\dd_k(y,z;x) \leq \dd_k(x,z;y)$ because $y$ must be in the Steiner tree of a set of $k$ vertices containing $x$ and $z$. We have $\dd_k(x,y;z) = \dd_k(x,z;y)$ if and only if $|V(F_x)| + |V(F_y)| < k-2$ as that forces all sets of $k-2$ vertices not including $x,y,$ or $z$ to have an element in $F_z$. Similarly, $\dd_k(y,z;x) = \dd_k(x,z;y)$ if and only if  $|V(F_y)| + |V(F_z)| < k-2$ as that forces all sets of $k-2$ vertices not including $x,y,$ or $z$ to have an element in $F_x$. Thus, combining all conditions for equality, we have $2\dd_k(y) \leq \dd_k(x) + \dd_k(z)$ with equality occurring when $|V(F_y)| + |V(F_z)| < k-2$ and $|V(F_x)| + |V(F_y)| < k-2$.
\end{proof}

\begin{remark}
The following Propositions~\ref{Theorem: Steiner Leaf Distance Concavity} and \ref{Theorem: Steiner Internal Distance Concavity} follow from similar logic. The conditions for equalities, while not difficult to obtain, are rather cumberson to state. We omit them here.
\end{remark}

\begin{proposition} \label{Theorem: Steiner Leaf Distance Concavity}
For a tree $T$ with vertices $x, y, z$ such that $xy, yz \in E(T)$ we have \begin{equation*}
2\mathsf{dL}_k(y) \leq \mathsf{dL}_k(x) + \mathsf{dL}_k(z).
\end{equation*}

\end{proposition}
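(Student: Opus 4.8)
The plan is to establish the inequality term by term over leaf sets. Since $xy,yz\in E(T)$, the vertex $y$ has degree at least two, so it is a cut vertex of $T$; removing it leaves $V(C_x)$ (the piece containing $x$), $V(C_z)$ (the piece containing $z$), and possibly other pieces, pairwise disjoint, with $C_x$ joined to $y$ only by the edge $xy$ and $C_z$ only by $yz$. Because $y$ is internal, $y\notin S$ for every set $S$ appearing in Definition~\ref{Def: Steiner k-leaf-distance}, so it suffices to show that for each $(k-1)$-subset $S\subseteq L(T)$ one has
\[
\dd(\{x\}\cup S)+\dd(\{z\}\cup S)\ \ge\ 2\,\dd(\{y\}\cup S);
\]
summing over all such $S$ then yields $2\,\mathsf{dL}_k(y)\le \mathsf{dL}_k(x)+\mathsf{dL}_k(z)$.

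I would first isolate two elementary facts about Steiner trees relative to the cut vertex $y$. (1) For a nonempty vertex set $W$: if $\{x\}\cup W\subseteq V(C_x)$, then the Steiner tree of $\{x\}\cup W$ lies inside $C_x$ and the Steiner tree of $\{y\}\cup W$ is obtained from it by adjoining the edge $xy$, so $\dd(\{x\}\cup W)=\dd(\{y\}\cup W)-1$; otherwise the Steiner tree of $\{x\}\cup W$ necessarily contains $y$, whence $\dd(\{x\}\cup W)\ge\dd(\{y\}\cup W)$. (2) If $W\subseteq V(C_x)$, then the Steiner tree of $\{z\}\cup W$ must pass through $z$, $y$, $x$ and then stay inside $C_x$, so $\dd(\{z\}\cup W)=\dd(\{x\}\cup W)+2$. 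Each fact has a mirror version with the roles of $x$ and $z$ (and of $C_x$ and $C_z$) exchanged.

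To finish, fix a $(k-1)$-subset $S\subseteq L(T)$; since $V(C_x)\cap V(C_z)=\varnothing$ and $|S|\ge 1$, at most one of the inclusions $S\subseteq V(C_x)$, $S\subseteq V(C_z)$ can hold. If neither holds, fact (1) and its mirror give $\dd(\{x\}\cup S)\ge\dd(\{y\}\cup S)$ and $\dd(\{z\}\cup S)\ge\dd(\{y\}\cup S)$, so the desired inequality holds (strictly, in general). If $S\subseteq V(C_x)$, then $\dd(\{x\}\cup S)=\dd(\{y\}\cup S)-1$ by fact (1) and $\dd(\{z\}\cup S)=\dd(\{x\}\cup S)+2=\dd(\{y\}\cup S)+1$ by fact (2), so the two sides are equal; the case $S\subseteq V(C_z)$ is symmetric. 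This proves the term-by-term inequality, hence the proposition.

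The argument has no genuinely hard step; the only point needing care is the degenerate case in which $x$ (or $z$) is itself a leaf, so that $C_x=\{x\}$, $F_x=\varnothing$, and $S$ may even contain $x$. This changes nothing: $x$ still has $y$ as its unique neighbor, so facts (1) and (2) hold verbatim, and the inclusion $S\subseteq V(C_x)$ then simply reads $S=\{x\}$ (possible only when $k=2$), which is handled by the same case. I would note in passing that this term-by-term approach also reproves Proposition~\ref{Theorem: Steiner Distance Concavity}, with its equality conditions recovered by tracking which of the inequalities above are strict.
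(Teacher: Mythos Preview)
Your proof is correct and is considerably cleaner than the paper's. The key observation that makes your approach work---and which the paper does not exploit---is that $y$, being internal, never appears in any leaf set $S$, so all three sums $\mathsf{dL}_k(x),\mathsf{dL}_k(y),\mathsf{dL}_k(z)$ run over the \emph{same} index set of $(k-1)$-subsets $S\subseteq L(T)$. This lets you compare term by term, and your trichotomy ($S\subseteq V(C_x)$, $S\subseteq V(C_z)$, or neither) handles everything uniformly, including the degenerate situations where $x$ or $z$ is a leaf and may lie in $S$. The paper, by contrast, separates out the sets $S$ avoiding $x,y,z$ (treated as in Proposition~\ref{Theorem: Steiner Distance Concavity}) and then runs a three-case analysis on whether $x$ and $z$ are leaves, unpacking each case into further sub-sums such as $\mathsf{dL}_k(x,y;z)$, $\mathsf{dL}_{k-1}(x;y,z)$, etc. Your route avoids all of this bookkeeping at the cost of losing the explicit equality conditions, which you did not claim anyway.

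One small caveat: your closing remark that the same term-by-term argument reproves Proposition~\ref{Theorem: Steiner Distance Concavity} is not quite right as stated. For $\dd_k$ the sums $\dd_k(x)$, $\dd_k(y)$, $\dd_k(z)$ run over \emph{different} index sets (the $(k-1)$-subsets avoiding $x$, $y$, $z$ respectively), so you cannot simply compare term by term; some bijective or inclusion--exclusion bookkeeping on sets containing one or more of $x,y,z$ is still needed. This does not affect your proof of the present proposition.
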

\begin{proof}
Let $S_1$ be the set containing all sets of $k-1$ leaves not containing $x,y$ or $z$. Following similar argument as in Proposition \ref{Theorem: Steiner Distance Concavity}, we have
\begin{equation*}
    \sum_{t \in S_1}2\mathsf{dL}(y \cup S_1) \leq \sum_{t \in S_1}\left(\mathsf{dL}(x \cup S_1) + \mathsf{dL}(z \cup S_1)\right).
\end{equation*}

Now we account for sets of $k-1$ leaves containing at least one of $x,y$ or $z$. Let $\mathsf{dL}'_k(x)$ be the sum of Steiner distances of the sets generated by the union of $x$ and a set of $k-1$ leaves containing at least one of $x,y$ or $z$. Let $\mathsf{dL}'_k(y)$ and $\mathsf{dL}'_k(z)$ be defined similarly.

Recall that for a vertex $v \in L(T)$, the set $\{v\} \cup S$ where $S$ is a set of $k-1$ leaves will have $k-1$ elements if $v \in S$. As $y$ is adjacent to both $x$ and $z$ it is not a leaf. We now use casework on whether $x$ and $z$ are leaves. 
\begin{enumerate} [label={Case }{{\arabic*}:}]
    \item Neither $x$ or $z$ are leaves. In this scenario, we do not have any sets of $k-1$ leaves containing at least one of $x,y$ and $z$.
    \item Exactly one of $x$ or $z$ is a leaf. As the case where $x$ is a leaf but $z$ is not is symmetrical to the case where $z$ is a leaf but $x$ is not, we first consider when $x$ is a leaf but $z$ is not. In this case, we have $\mathsf{dL}'_k(x) = \mathsf{dL}_{k-1}(x;y,z)$, $\mathsf{dL}'_k(y) = \mathsf{dL}_k(x,y;z)$, and $\mathsf{dL}'_k(z) = \mathsf{dL}_k(x,z;y)$. We will now consider each possible set of $k-2$ vertices not containing $x,y,$ or $z$. Note that for each set $t \in S_{x;y,z}^{k-2}$, we have $2\dd(\{x,y\} \cup t) = \dd(\{x,z\} \cup t) + \dd(\{x\} \cup t)$ as $\dd(\{x\} \cup t) + 2 = \dd(\{x,y\} \cup t)+1 = \dd(\{x,z\} \cup t)$. Similarly, for each set $t \in S_{z;x,y}^{k-2}$, we have $2\dd(\{x,y\} \cup t) = \dd(\{x,z\} \cup t) + \dd(\{x\} \cup t)$ as $\dd(\{x\} \cup t) = \dd(\{x,y\} \cup t) = \dd(\{x,z\} \cup t)$.

    However, for each set $t \in S_{y;x,z}^{k-2}$, we have $\dd(\{x\} \cup t) + 1 = \dd(\{x,y\} \cup t) + 1 = \dd(\{x,z\} \cup t)$. So, $2\dd(\{x,y\} \cup t) + 1 = \dd(\{x\} \cup t) + \dd(\{x,z\} \cup t)$. Similarly, for each set $t \in S_{x,y;z}^{k-2}$, we have $\dd(\{x\} \cup t) + 1 = \dd(\{x,y\} \cup t) + 1 = \dd(\{x,z\} \cup t)$. For each set $t$ in $S_{x,z;y}^{k-2}, S_{y,z;x}^{k-2}$, and $S_{x,y,z;}^{k-2}$, we have $\dd(\{x\} \cup t) = \dd(\{x,y\} \cup t)= \dd(\{x,z\} \cup t)$. 
    
    Thus, as we have covered all sets of $k-2$ leaves not including $x,y$ or $z$, when $x$ is a leaf but $z$ is not, we have $2\mathsf{dL}'_k(y) \leq \mathsf{dL}'_k(x) + \mathsf{dL}'_k(z)$. Using similar logic, the case where $z$ is a leaf but $x$ is not has $2\mathsf{dL}'_k(y) \leq \mathsf{dL}'_k(x) + \mathsf{dL}'_k(z)$.
    
\item Both $x$ and $z$ are leaves. In this scenario, we have the equalities $\mathsf{dL}'_k(x) = \mathsf{dL}_{k-1}(x;y,z) + \mathsf{dL}_{k}(x,z;y) + \mathsf{dL}_{k-1}(x,z;y)$, $\mathsf{dL}'_k(y) = \mathsf{dL}_{k}(x,y;z) + \mathsf{dL}_{k}(y,z;x) + \mathsf{dL}_{k}(x,y,z;)$, and $\mathsf{dL}'_k(z) = \mathsf{dL}_{k}(x,z;y) + \mathsf{dL}_{k-1}(x;y,z) + \mathsf{dL}_{k-1}(x,z;y)$. Using our results from Case 2, it suffices to prove $\mathsf{dL}_{k}(x,y,z;) \leq \mathsf{dL}_{k-1}(x,z;y).$ This is true since for any set $S$ of $k-3$ leaves not including $x,y,$ or $z$, the Steiner tree of the set $S \cup \{x,z\}$ is the same as the Steiner tree of the set $S \cup \{x,y,z\}$. Thus, in this case, $2\mathsf{dL}'_k(x) \leq \mathsf{dL}'_k(y) + \mathsf{dL}'_k(z)$.
\end{enumerate}
Combining this casework with our initial consideration of sets of $k-1$ leaves not containing $x,y,$ or $z$ gives $2\mathsf{dL}_k(y) \leq \mathsf{dL}_k(x) + \mathsf{dL}_k(z)$.

\end{proof}

\begin{proposition}\label{Theorem: Steiner Internal Distance Concavity}
For a tree $T$ with vertices $x, y, z$ such that $xy, yz \in E(T)$ we have
\begin{equation*}
2\mathsf{dI}_k(y) \leq \mathsf{dI}_k(x) + \mathsf{dI}_k(z).
\end{equation*}
\end{proposition}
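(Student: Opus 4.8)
The plan is to reduce the statement to a pointwise inequality between individual Steiner trees and then sum. The point is that, unlike the sums defining $\dd_k(x),\dd_k(y),\dd_k(z)$ (which range over $k$-sets containing $x$, $y$, $z$ respectively — three different families, which is exactly why Proposition~\ref{Theorem: Steiner Distance Concavity} needs binomial bookkeeping), the sums defining $\mathsf{dI}_k(x)$, $\mathsf{dI}_k(y)$, $\mathsf{dI}_k(z)$ all range over the \emph{same} family $\mathcal{S}=\{S\subseteq I(T):|S|=k-1\}$. So it suffices to prove that for every nonempty $W\subseteq V(T)$,
\[
\dd(\{x\}\cup W)+\dd(\{z\}\cup W)\ \ge\ 2\,\dd(\{y\}\cup W),
\]
and then sum this over $W\in\mathcal{S}$ (if $|I(T)|<k-1$ the family is empty and the claim is trivial).

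For the pointwise inequality I would first record that in a tree the Steiner tree of $\{v\}\cup W$ is the Steiner tree $T^{\ast}$ of $W$ with the unique path from $v$ to $T^{\ast}$ appended, so $\dd(\{v\}\cup W)=\dd(W)+p(v)$, where $p(v)$ is the distance from $v$ to $T^{\ast}$ (with $p(v)=0$ when $v\in T^{\ast}$, in particular when $v\in W$; this covers the case $y\in S$, where $\{y\}\cup S$ has only $k-1$ elements). Thus the inequality becomes $p(x)+p(z)\ge 2p(y)$. If $y\in T^{\ast}$ then $p(y)=0$ and there is nothing to prove. Otherwise $T^{\ast}$, being connected and avoiding $y$, lies in a single component of $T-y$; the connected components of $T-y$ are precisely $C_x$, $C_z$, and the connected components of $F_y$ (notation as fixed at the start of this section). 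If $T^{\ast}\subseteq C_x$, the path from $y$ to $T^{\ast}$ passes through $x$, so $p(x)=p(y)-1$ and $p(z)=p(y)+1$, giving equality; the case $T^{\ast}\subseteq C_z$ is symmetric; and if $T^{\ast}$ lies in a component of $F_y$ then $p(x)=p(z)=p(y)+1$ and the inequality is strict. Summing over $W\in\mathcal{S}$ then yields $\mathsf{dI}_k(x)+\mathsf{dI}_k(z)\ge 2\,\mathsf{dI}_k(y)$.

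The only real work is making this case analysis airtight: justifying $\dd(\{v\}\cup W)=\dd(W)+p(v)$ in all degenerate cases (including $|W|=1$ and $v\in W$), and checking that when $y\notin T^{\ast}$ the subtree $T^{\ast}$ genuinely sits inside one of the listed components of $T-y$ with the three projection distances behaving as stated. None of this needs the binomial identities from Proposition~\ref{Theorem: Steiner Distance Concavity}. As an alternative one could instead follow the template of Propositions~\ref{Theorem: Steiner Distance Concavity} and \ref{Theorem: Steiner Leaf Distance Concavity} verbatim — splitting off the sets $S\in\mathcal{S}$ that meet $\{x,y,z\}$, which here means $S$ contains $y$ and, when they happen to be internal, possibly $x$ or $z$, prompting casework on how many of $x,z$ lie in $I(T)$ — but the projection argument above removes the need for that bookkeeping.
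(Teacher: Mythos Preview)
Your argument is correct and takes a genuinely different route from the paper. The paper follows the template of Propositions~\ref{Theorem: Steiner Distance Concavity} and~\ref{Theorem: Steiner Leaf Distance Concavity}: it first handles the contribution of sets $S$ of $k-1$ internal vertices disjoint from $\{x,y,z\}$ by the same partition into the regions $S^{k-1}_{\bullet;\bullet}$, and then does casework on which of $x,z$ are internal (three cases, since $y$ is always internal), unwinding the remaining terms via the auxiliary quantities $\mathsf{dI}_k(a_1,\ldots;b_1,\ldots)$ and $\mathsf{dI}_{k-1}(a_1,\ldots;b_1,\ldots)$. Your approach sidesteps all of that by exploiting the key structural feature you identified --- that the summation index set $\mathcal{S}=\{S\subseteq I(T):|S|=k-1\}$ is the \emph{same} for $\mathsf{dI}_k(x)$, $\mathsf{dI}_k(y)$, $\mathsf{dI}_k(z)$ --- and reducing to the pointwise inequality $p(x)+p(z)\ge 2p(y)$ for the projection distances to the fixed Steiner tree $T^{\ast}$ of $S$. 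This is cleaner, requires no binomial bookkeeping, and in fact yields the equality characterization for free (equality iff for every $S\in\mathcal{S}$ the Steiner tree of $S$ either contains $y$ or lies in $C_x\cup C_z$, i.e.\ iff $|I'(F_y)|<k-1$). The same projection argument also reproves Proposition~\ref{Theorem: Steiner Leaf Distance Concavity} with no extra work, since $\mathsf{dL}_k$ likewise uses a common index set; only $\dd_k$ genuinely needs the paper's decomposition, because there the index set excludes the vertex in question. What the paper's approach buys is uniformity of presentation across all three propositions; what yours buys is brevity and a transparent reason why the internal and leaf versions are easier than the ordinary one.
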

\begin{proof}
We first consider the Steiner distance contributed toward $\mathsf{dI}_k(x), \mathsf{dI}_k(y),$ and $\mathsf{dI}_k(z)$ by sets with one of $x, y, \textup{ or } z$. Denote $S_1$ as the set of sets of $k-1$ internal vertices not containing $x,y$ or $z$. Again, similar logic to that in Proposition \ref{Theorem: Steiner Distance Concavity} gives 
\begin{equation*}
    \sum_{t \in S_1}2\mathsf{dI}(y \cup S_1) \leq \sum_{t \in S_1} \left(\mathsf{dI}(x \cup S_1) + \mathsf{dI}(z \cup S_1) \right).
\end{equation*}

Now we account for sets of $k-1$ internal vertices containing at least one of $x,y$ or $z$. Let $\mathsf{dI}'_k(x)$ be the sum of Steiner distances of sets created by the union of $x$ and a set of $k-1$ internal vertices containing at least one of $x,y$ or $z$. Let $\mathsf{dI}'_k(y)$ and $\mathsf{dI}'_k(z)$ be defined similarly. Recall that for a vertex $v \in I(T)$, the set $\{v\} \cup S$ where $S$ is a set of $k-1$ internal vertices will have $k-1$ elements if $v \in S$. Also, because $y$ is adjacent to both $x$ and $z$, it is an internal vertex. We now use casework on whether $x$ and $z$ are internal vertices. 

\begin{enumerate} [label={Case }{{\arabic*}:}]
    \item Neither $x$ or $z$ are internal vertices. In this scenario, we have $\mathsf{dI}'_k(x) = \mathsf{dI}_k{(x,y;z)}$, $\mathsf{dI}'_k(y) = \mathsf{dI}_{k-1}{(y;x,z)}$, and $\mathsf{dI}'_k(z) = \mathsf{dI}_k{(yz;x)}$. Note that we must have $\mathsf{dI}'_k(y) \leq \mathsf{dI}'_k(x)$ in this case because for any set $S$ of $k-2$ vertices not including $x,y$ or $z$, the Steiner tree of $S \cup \{x,y\}$ will include the Steiner tree of $S \cup \{y\}$. Similarly, $\mathsf{dI}'_k(y) \leq \mathsf{dI}'_k(z)$. Thus, in this case, $2\mathsf{dI}'_k(y) \leq \mathsf{dI}'_k(x) + \mathsf{dI}'_k(z)$.
    
    \item Exactly one of $x$ or $z$ is an internal vertex. We first consider when $x$ is an internal vertex and $z$ is not. In this case, using known inequalities from Case 1 reduces proving $2\mathsf{dI}'_k(y) \leq \mathsf{dI}'_k(x) + \mathsf{dI}'_k(z)$ to showing $2(\mathsf{dI}_k{(x,y;z)} + \mathsf{dI}_{k-1}{(x,y;z)}) \leq \mathsf{dI}_{k-1}{(x,y;z)} + \mathsf{dI}_{k-1}{(x;y,z)} + \mathsf{dI}_k{(x,z;y)} + \mathsf{dI}_k{(x,y,z;)}.$ We will first prove that $\mathsf{dI}_{k-1}{(x,y;z)} \leq \mathsf{dI}_k{(x,y,z;)}$. This is true since for a set $S$ of $k-3$ internal vertices not containing $x,y,$ or $z$, the Steiner tree of $\{x,y\} \cup S$ will be included in the Steiner tree of $\{x,y,z\} \cup S$. Additionally, $2\mathsf{dI}_k{(x,y;z)} \leq \mathsf{dI}_{k-1}{(x;y,z)} + \mathsf{dI}_k{(x,z;y)}$ by a similar argument to that in Case 2 of Theorem \ref{Theorem: Steiner Leaf Distance Concavity}. This implies $2\mathsf{dI}'_k(y) \leq \mathsf{dI}'_k(x) + \mathsf{dI}'_k(z)$. The case where $z$ is an internal vertex but $x$ is not is symmetric.
    
    \item Both $x$ and $z$ are internal vertices. Using known inequalities from Case 1 and Case 2, we reduce proving $2\mathsf{dI}'_k(y) \leq \mathsf{dI}'_k(x) + \mathsf{dI}'_k(z)$ to showing $2(\mathsf{dI}_k{(x,y,z;)} + \mathsf{dI}_{k_1}{(x,y,z;)}) \leq \mathsf{dI}_{k-1}{(x,z;y)} + \mathsf{dI}_{k-1}{(x,y,z;)} + \mathsf{dI}_{k-1}{(x,z;y)} + \mathsf{dI}_{k-1}{(x,y,z;)}$ which reduces to $\mathsf{dI}_k{(x,y,z;)} \leq \mathsf{dI}_{k-1}{(x,z;y)}$. This is true since for a set $S$ of $k-3$ internal vertices not containing $x,y,$ or $z$, the Steiner tree of $\{x,z\} \cup S$ will be the same as the Steiner tree of $\{x,y,z\} \cup S$.
\end{enumerate}
Thus, by combining our casework analysis with our initial analysis on sets of $k-1$ internal vertices not containing $x,y,$ or $z$, we achieve our desired result.
\end{proof}

\section{Distances between the Steiner $k$-median and its variants}\label{Sec:centroids}

We start by establishing a criterion for Steiner $k$-centroids. For a tree $T$ and two vertices $x,y \in V(T)$ with $xy \in E(T)$, let $C_x$ and $C_y$ denote the connected components containing $x$ and $y$ respectively in $T-xy$. Let $T_x = C_x - x$ and $T_y = C_y - y$. We also define $n_{xy}(x)$ as the number of vertices $v$ with $\dd(v,x) < \dd(v,y)$. Note that $|V(T_x)| = n_{xy}(x) - 1$. The following result was shown in $\cite{berineke1996}$.
\begin{theorem}{\textup{\cite{berineke1996}}} \label{Theorem: Steiner distance comparison}
For a tree $T$ and two vertices $x,y \in V(T)$ with $xy \in E(T)$, 
\begin{align*}
\dd_k(x) = \dd_k(y) - \binom{n_{xy}(x)-1}{k-1} + \binom{n_{xy}(y)-1}{k-1}.
\end{align*}
\end{theorem}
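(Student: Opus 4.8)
The plan is to turn $\dd_k(v)$ into a sum over the edges of $T$, after which comparing $x$ and $y$ becomes a one-line computation. For any $A\subseteq V(T)$ the Steiner tree $\mathrm{St}(A)$ is a subtree of $T$, and a fixed edge $e$ lies in $\mathrm{St}(A)$ if and only if $A$ has a vertex in each of the two components of $T-e$; consequently $\dd(A)=\sum_{e\in E(T)}\mathbf{1}[\,e\in\mathrm{St}(A)\,]$. Substituting this into the definition of $\dd_k(v)$ and interchanging the two summations gives
\begin{equation*}
\dd_k(v)=\sum_{\substack{S\subseteq V(T)\setminus\{v\}\\ |S|=k-1}}\dd(\{v\}\cup S)=\sum_{e\in E(T)}\#\bigl\{A\subseteq V(T): |A|=k,\ v\in A,\ e\in\mathrm{St}(A)\bigr\}.
\end{equation*}

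Next I would evaluate the inner count for each fixed edge $e$. Let $C^e(v)$ be the component of $T-e$ containing $v$, and set $c_e(v)=|V(C^e(v))|$. A $k$-set $A$ with $v\in A$ satisfies $e\in\mathrm{St}(A)$ exactly when $A$ is not entirely contained in $C^e(v)$, so the count is $\binom{n-1}{k-1}-\binom{c_e(v)-1}{k-1}$ (all $k$-subsets through $v$, minus those confined to $C^e(v)$), with the convention that $\binom{m}{k-1}=0$ when $m<k-1$. Since a tree on $n$ vertices has $n-1$ edges, summing over $e$ yields the closed form
\begin{equation*}
\dd_k(v)=(n-1)\binom{n-1}{k-1}-\sum_{e\in E(T)}\binom{c_e(v)-1}{k-1}.
\end{equation*}

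To finish, I would compare $\dd_k(x)$ and $\dd_k(y)$ using this formula. For every edge $e\neq xy$, the edge $xy$ itself is present in $T-e$, so $x$ and $y$ lie in the same component and $c_e(x)=c_e(y)$; all of these terms cancel in $\dd_k(x)-\dd_k(y)$. Only the term for $e=xy$ remains, and there $c_{xy}(x)=|V(C_x)|=n_{xy}(x)$ and $c_{xy}(y)=|V(C_y)|=n_{xy}(y)$ (recalling $|V(T_x)|=n_{xy}(x)-1$). Hence
\begin{equation*}
\dd_k(x)-\dd_k(y)=\binom{n_{xy}(y)-1}{k-1}-\binom{n_{xy}(x)-1}{k-1},
\end{equation*}
which is exactly the claimed identity.

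There is no real obstacle here: the argument rests only on the standard edge-membership description of Steiner trees in a tree and on elementary counting. The single point that needs attention is the handling of small components, where I would rely on the convention $\binom{m}{k-1}=0$ for $m<k-1$ — this is consistent, since a component with fewer than $k$ vertices cannot host all $k$ chosen vertices and so contributes nothing. If one preferred to avoid the edge-sum identity, an alternative is to partition the $k$-subsets of $V(T)$ by whether they meet $\{x\}$, $\{y\}$, both, or neither, cancel those containing both $x$ and $y$, and split the Steiner tree of each surviving set across the edge $xy$; this also works but is more bookkeeping-heavy, so I would present the edge-sum version.
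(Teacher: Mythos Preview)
Your proof is correct. Note, however, that the paper does not supply its own proof of this statement: it is quoted from \cite{berineke1996} and used as a black box. So there is nothing in the present paper to compare against directly.

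That said, it is worth contrasting your approach with the style of argument the paper uses elsewhere (e.g.\ in Proposition~\ref{Theorem: Steiner Distance Concavity}) and which is closer in spirit to the original proof in \cite{berineke1996}: partition the $(k-1)$-subsets $S\subseteq V(T)\setminus\{x\}$ (resp.\ $\setminus\{y\}$) according to whether $S$ lies entirely in $T_x$, entirely in $T_y$, or meets both, and observe that $\dd(\{x\}\cup S)$ and $\dd(\{y\}\cup S)$ differ by $\pm 1$ or $0$ in each case. Your edge-sum rewriting
\[
\dd_k(v)=(n-1)\binom{n-1}{k-1}-\sum_{e\in E(T)}\binom{c_e(v)-1}{k-1}
\]
bypasses that case analysis entirely: once the formula is in hand, the fact that $c_e(x)=c_e(y)$ for every $e\neq xy$ makes the comparison immediate. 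This is cleaner and also yields, as a byproduct, an explicit closed form for $\dd_k(v)$ in terms of the component sizes $\{c_e(v)\}_{e\in E(T)}$, which the set-partition argument does not directly give. The only caveat you flagged---the convention $\binom{m}{k-1}=0$ for $m<k-1$---is handled correctly.
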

When $x$ is a Steiner  $k$-centroid, we must have $\dd_k(x) \leq \dd_k(y)$, implying $\binom{n_{xy}(x)-1}{k-1} \geq \binom{n_{xy}(y)-1}{k-1}$. Thus, either $n_{xy}(x) \geq n_{xy}(y)$ or $n_{xy}(x), n_{xy}(y) < k$. Also, if $y \notin V(M_k(T))$, we have $n_{xy}(x) > n_{xy}(y)$ and $n_{xy}(x) \geq k$. This motivates the following definition.

\begin{definition}
A vertex $u$ is $k$-satisfactory if for all adjacent vertices $v$, either $n_{uv}(u) \geq n_{uv}(v)$ or $n_{uv}(u), n_{uv}(v) < k$. Also, if $v \notin V(M_k(T))$, we have $n_{uv}(u) > n_{uv}(v)$ with $n_{uv}(u) \geq k$.
\end{definition}

We now show the following result for completeness.

\begin{theorem}{\label{Theorem: Steiner centroid prerequisites}}
For a tree $T$, a vertex $v \in V(M_k(G))$ if and only if it is $k-$satisfactory.
\end{theorem}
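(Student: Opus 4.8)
# Proof Proposal for Theorem: $v \in V(M_k(G))$ iff $v$ is $k$-satisfactory

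The plan is to prove both directions, using Theorem~\ref{Theorem: Steiner distance comparison} as the workhorse and the concavity result (Proposition~\ref{Theorem: Steiner Distance Concavity}) to control how $\dd_k$ behaves as we walk along a path.

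\medskip

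\textbf{The forward direction ($v \in V(M_k(T)) \Rightarrow v$ is $k$-satisfactory).} This is essentially the discussion preceding the definition, which I would turn into a clean argument. Suppose $v$ is a Steiner $k$-centroid and $u$ is any neighbor of $v$. By minimality $\dd_k(v) \le \dd_k(u)$, so Theorem~\ref{Theorem: Steiner distance comparison} (applied with the pair $v,u$) gives $\binom{n_{vu}(v)-1}{k-1} \ge \binom{n_{vu}(u)-1}{k-1}$. Since $m \mapsto \binom{m-1}{k-1}$ is strictly increasing for $m \ge k$ and is zero for $m < k$, this forces either $n_{vu}(v) \ge n_{vu}(u)$, or else both $n_{vu}(v), n_{vu}(u) < k$. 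For the second clause of the definition: if $u \notin V(M_k(T))$ then $\dd_k(v) < \dd_k(u)$ strictly, so the binomial inequality is strict, which is impossible unless $\binom{n_{vu}(u)-1}{k-1} < \binom{n_{vu}(v)-1}{k-1}$ with the right side positive; hence $n_{vu}(v) \ge k$ and $n_{vu}(v) > n_{vu}(u)$. This handles one neighbor; since $u$ was arbitrary, $v$ is $k$-satisfactory.

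\medskip

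\textbf{The reverse direction ($v$ is $k$-satisfactory $\Rightarrow v \in V(M_k(T))$).} Here I would argue by contradiction. Suppose $v$ is $k$-satisfactory but not a centroid. Pick a genuine centroid $w \in V(M_k(T))$ and look at the path $v = v_0, v_1, \dots, v_m = w$. Since $\dd_k(v) > \dd_k(w)$, there is a first edge $v_0 v_1$ along which $\dd_k$ strictly decreases, i.e.\ $\dd_k(v_1) < \dd_k(v_0)$ — or at least $\dd_k$ is nonincreasing along the path and strictly decreases somewhere; by concavity (Proposition~\ref{Theorem: Steiner Distance Concavity}), once $\dd_k$ strictly decreases across one edge it cannot later strictly increase, so in fact $\dd_k(v_1) \le \dd_k(v_0)$ and $\dd_k(v_1) < \dd_k(v_0)$ must occur, and then we get a strict decrease at the very first step. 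Apply Theorem~\ref{Theorem: Steiner distance comparison} to the pair $v_0, v_1$: $\dd_k(v_1) - \dd_k(v_0) = \binom{n_{v_0v_1}(v_0)-1}{k-1} - \binom{n_{v_0v_1}(v_1)-1}{k-1} < 0$, so $\binom{n_{v_0v_1}(v_1)-1}{k-1} > \binom{n_{v_0v_1}(v_0)-1}{k-1}$, forcing $n_{v_0v_1}(v_1) > n_{v_0v_1}(v_0)$ and $n_{v_0v_1}(v_1) \ge k$. But $k$-satisfactoriness of $v = v_0$ applied to the neighbor $v_1$ says either $n_{v_0v_1}(v_0) \ge n_{v_0v_1}(v_1)$ (contradiction), or $n_{v_0v_1}(v_0), n_{v_0v_1}(v_1) < k$ (contradicts $n_{v_0v_1}(v_1) \ge k$). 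Either way we reach a contradiction, so $v$ must be a centroid.

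\medskip

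\textbf{Main obstacle.} The delicate point is the reverse direction's claim that concavity of $\dd_k$ along the $v$-to-$w$ path forces the strict decrease to happen \emph{at the first edge} (so that we may invoke $k$-satisfactoriness of $v$ itself, rather than of some interior vertex about which we know nothing). I would make this rigorous as follows: by Proposition~\ref{Theorem: Steiner Distance Concavity}, the sequence $\dd_k(v_0), \dd_k(v_1), \dots, \dd_k(v_m)$ is concave, so its successive differences are nonincreasing; since the total change from $v_0$ to $v_m$ is negative, the first difference $\dd_k(v_1) - \dd_k(v_0)$ is $\le$ the average difference $< 0$ unless all differences are zero — but they are not all zero since $\dd_k(v_0) > \dd_k(v_m)$. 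Hence $\dd_k(v_1) < \dd_k(v_0)$, which is exactly what the argument above needs. A secondary point worth a sentence is the monotonicity of $m \mapsto \binom{m-1}{k-1}$ (strictly increasing once $m \ge k$, and vanishing for $m < k$), which is what lets us translate binomial inequalities back into inequalities on the $n_{uv}(\cdot)$ counts; this is elementary but is used in both directions and should be stated once up front.
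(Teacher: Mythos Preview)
Your proof is correct, but fix one slip in the ``Main obstacle'' paragraph: Proposition~\ref{Theorem: Steiner Distance Concavity} reads $2\dd_k(y) \le \dd_k(x) + \dd_k(z)$, so the sequence $\dd_k(v_0), \dots, \dd_k(v_m)$ is convex in the usual sense (despite the section title), and its successive differences are \emph{nondecreasing}, not nonincreasing. With nondecreasing differences the first difference is at most the average and hence negative, so your conclusion $\dd_k(v_1) < \dd_k(v_0)$ stands; only the word ``nonincreasing'' needs to be swapped.

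Your reverse direction takes a genuinely different route from the paper's. The paper does not invoke Proposition~\ref{Theorem: Steiner Distance Concavity} here at all; instead it shows directly that any two $k$-satisfactory vertices $u,v$ satisfy $\dd_k(u)=\dd_k(v)$, by walking the path $u=u_0u_1\cdots u_j=v$ and noting that $n_{u_iu_{i+1}}(u_i)$ is strictly increasing in $i$ while $n_{u_iu_{i+1}}(u_{i+1})$ is strictly decreasing. Together with $k$-satisfactoriness at \emph{both} endpoints, this rules out any edge on the path across which the two binomials in Theorem~\ref{Theorem: Steiner distance comparison} differ, so $\dd_k$ is constant along the whole path; since centroids are $k$-satisfactory, every $k$-satisfactory vertex then shares the minimum value. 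Your argument is shorter and more geometric, collapsing the path analysis into a single convexity inequality, at the price of depending on Proposition~\ref{Theorem: Steiner Distance Concavity}; the paper's argument is self-contained, needing only Theorem~\ref{Theorem: Steiner distance comparison}.
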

\begin{proof}
By definition, every vertex $v \in V(M_k(G))$ is $k$-satisfactory. We now prove the opposite direction. Note that if two $k$-satisfactory vertices are adjacent, they must have equal Steiner $k$-distance. Now, let $u$ and $v$ be non-adjacent $k-$satisfactory vertices and let $P(u,v)$ be $u=u_0u_1\dots u_j=v$.
For all possible $i$, we have 
\begin{equation*}
n_{u_iu_{i+1}}(u_i) < n_{u_{i+1}u_{i+2}}(u_{i+1}) \textup{ and } n_{u_iu_{i+1}}(u_{i+1}) > n_{u_{i+1}u_{i+2}}(u_{i+2}).
\end{equation*}
Thus, we cannot have $n_{u_iu_{i+1}}(u_i) > n_{u_{i}u_{i+1}}(u_{i+1})$ with $n_{u_iu_{i+1}}(u_i) \geq k$ as that would contradict $v$ being $k$-satisfactory. We also cannot have $n_{u_iu_{i+1}}(u_{i}) < n_{u_{i}u_{i+1}}(u_{i+1})$ with $n_{u_{i}u_{i+1}}(u_{i+1}) \geq k$ as that would contradict $u$ being $k-$satisfactory.   

Thus, we conclude that $\dd_k(u) = \dd_k(u_1) = \dots = \dd_k(v).$  So, all $k-$satisfactory vertices have equal Steiner $k$-distance. This implies that all $k-$satisfactory vertices must be in the Steiner $k$-median. 
\end{proof}

Similar arguments lead to the following two lemmas.

\begin{lemma} \label{Theorem: Leaf-Centroid prerequisites}
For a tree $T$, a vertex $u \in V(T)$ is a Steiner $k$-leaf-centroid if and only if for all $v$ such that $uv \in E(T)$, either $|L'(T_u)| \geq |L'(T_v)|$ or $|L'(T_u)|, |L'(T_v)| < k - 1$. Also, if $v$ is not a Steiner $k$-leaf-centroid, we have $|L'(T_u)|  > |L'(T_v)|$ with $|L'(T_u)| \geq k-1$.
\end{lemma}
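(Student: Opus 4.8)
The plan is to mirror the proof of Theorem~\ref{Theorem: Steiner centroid prerequisites}, but working with $\mathsf{dL}_k$ in place of $\dd_k$. There are three stages. First I would establish a leaf analogue of Theorem~\ref{Theorem: Steiner distance comparison}: for an edge $uv\in E(T)$,
\begin{equation*}
\mathsf{dL}_k(u)=\mathsf{dL}_k(v)-\binom{\ell_u}{k-1}+\binom{\ell_v}{k-1},
\end{equation*}
where $\ell_u$ (resp.\ $\ell_v$) is the number of leaves of $T$ lying in $C_u$ (resp.\ $C_v$); this equals $|L'(T_u)|$ (resp.\ $|L'(T_v)|$) whenever $u$ (resp.\ $v$) is an internal vertex of $T$, the leaf-endpoint cases being degenerate and checked by hand. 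Second I would invoke Proposition~\ref{Theorem: Steiner Leaf Distance Concavity} to reduce ``Steiner $k$-leaf-centroid'' to the local condition $\mathsf{dL}_k(u)\le\mathsf{dL}_k(v)$ for all neighbors $v$. Third I would read off the stated biconditional from the formula using elementary monotonicity of binomial coefficients.

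For the first stage: both $\mathsf{dL}_k(u)$ and $\mathsf{dL}_k(v)$ are sums of $\dd(\{\cdot\}\cup S)$ over the \emph{same} index set, namely all $(k-1)$-subsets $S\subseteq L(T)$. Fix such an $S$. If $S$ has a vertex in $C_u$ and a vertex in $C_v$, then the Steiner tree of $S$ already uses the edge $uv$ and therefore contains both $u$ and $v$, so $\dd(\{u\}\cup S)=\dd(S)=\dd(\{v\}\cup S)$. If $S\subseteq V(C_v)$, then the Steiner tree of $\{u\}\cup S$ is obtained from that of $\{v\}\cup S$ by appending the edge $uv$, so $\dd(\{u\}\cup S)=\dd(\{v\}\cup S)+1$; symmetrically, $\dd(\{v\}\cup S)=\dd(\{u\}\cup S)+1$ when $S\subseteq V(C_u)$. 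Summing, the first type of $S$ contributes nothing to $\mathsf{dL}_k(u)-\mathsf{dL}_k(v)$, while the numbers of subsets of the remaining two types are $\binom{\ell_u}{k-1}$ and $\binom{\ell_v}{k-1}$, yielding the claimed identity.

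For the second and third stages: by Proposition~\ref{Theorem: Steiner Leaf Distance Concavity}, $\mathsf{dL}_k$ is concave along every path of $T$, so if $\mathsf{dL}_k(u)\le\mathsf{dL}_k(v)$ for all neighbors $v$ then, walking any path $u=p_0,p_1,\dots,p_m=w$, the increments $\mathsf{dL}_k(p_{i+1})-\mathsf{dL}_k(p_i)$ are non-decreasing in $i$ and the first is non-negative, whence $\mathsf{dL}_k(u)\le\mathsf{dL}_k(w)$; the converse is trivial, so $u$ is a Steiner $k$-leaf-centroid iff $\mathsf{dL}_k(u)\le\mathsf{dL}_k(v)$ for every $v$ with $uv\in E(T)$. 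By the identity of the first stage (with $\ell_u=|L'(T_u)|$, $\ell_v=|L'(T_v)|$ when the endpoints are internal), this inequality is equivalent to $\binom{|L'(T_u)|}{k-1}\ge\binom{|L'(T_v)|}{k-1}$. Since $\binom{m}{k-1}$ vanishes for $m<k-1$ and is strictly increasing for $m\ge k-1$, this holds precisely when $|L'(T_u)|\ge|L'(T_v)|$ or $|L'(T_u)|,|L'(T_v)|<k-1$, giving the first assertion. For the addendum, if $u$ is a Steiner $k$-leaf-centroid but $v$ is adjacent to $u$ and is not, then $\mathsf{dL}_k(v)>\mathsf{dL}_k(u)$ strictly, hence $\binom{|L'(T_u)|}{k-1}>\binom{|L'(T_v)|}{k-1}\ge 0$; this forces $\binom{|L'(T_u)|}{k-1}\ge 1$, i.e.\ $|L'(T_u)|\ge k-1$, and then strictness forces $|L'(T_u)|>|L'(T_v)|$.

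The main obstacle is the first stage: one must be careful with the case analysis on $S$ — in particular when $u$ or $v$ is itself a leaf of $T$, so that $S$ may contain it and $C_u$ or $C_v$ may be a single vertex — and confirm that in those situations the count of each type of subset is still exactly the binomial coefficient above. The reduction in the second stage (a local minimum of a path-concave function on a tree is a global minimum) is routine but deserves the explicit one-line argument given.
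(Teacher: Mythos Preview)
Your proposal is correct and follows essentially the same route the paper intends: the paper's proof is simply ``Similar arguments lead to the following two lemmas,'' referring back to Theorem~\ref{Theorem: Steiner centroid prerequisites}, and your first stage is exactly the leaf analogue of Theorem~\ref{Theorem: Steiner distance comparison} that such a similar argument needs.

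One small methodological difference worth noting: for the step ``local minimum along edges $\Rightarrow$ global minimum,'' the paper's Theorem~\ref{Theorem: Steiner centroid prerequisites} argues directly by tracking the quantities $n_{u_iu_{i+1}}(u_i)$ along a path between two candidate centroids, while you instead invoke Proposition~\ref{Theorem: Steiner Leaf Distance Concavity} to get non-decreasing increments. Your route is slightly cleaner and makes good use of the concavity result already established; the paper's path-tracking argument avoids appealing to concavity but is a bit more ad hoc. Either is fine. Your flag about the case where $u$ or $v$ is itself a leaf (so that $\ell_u$ and $|L'(T_u)|$ differ by one) is well taken; the identity in your first stage still holds verbatim with $\ell_u,\ell_v$ counting leaves of $T$ in $C_u,C_v$, so the translation to $|L'(T_u)|,|L'(T_v)|$ is the only place requiring care.
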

\begin{lemma} \label{Theorem: Internal-Centroid prerequisites}
For a tree $T$, a vertex $u \in V(T)$ is a Steiner $k$-internal-centroid if and only if for all $v$ such that $uv \in E(T)$, either $|I'(T_u)| \geq |I'(T_v)|$ or $|I'(T_u)|, |I'(T_v)| < k - 1$. Also, if $v$ is not a Steiner $k$-internal-centroid, we have $|I'(T_u)|  > |I'(T_v)|$ with $|I'(T_u)| \geq k-1$.
\end{lemma}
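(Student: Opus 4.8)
The plan is to mirror the proofs of Theorem~\ref{Theorem: Steiner distance comparison} and Theorem~\ref{Theorem: Steiner centroid prerequisites}, replacing $\dd_k$ by $\mathsf{dI}_k$ and vertex counts by counts of internal vertices; the same scheme proves Lemma~\ref{Theorem: Leaf-Centroid prerequisites} with $L'$ in place of $I'$. The crucial ingredient is an internal analogue of Theorem~\ref{Theorem: Steiner distance comparison}: for $xy\in E(T)$ with $C_x,C_y$ the components of $T-xy$,
\[
\mathsf{dI}_k(x)=\mathsf{dI}_k(y)-\binom{|I'(C_x)|}{k-1}+\binom{|I'(C_y)|}{k-1}.
\]
To see this, fix a $(k-1)$-subset $S\subseteq I(T)$ and compare $\dd(\{x\}\cup S)$ with $\dd(\{y\}\cup S)$. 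Since $T$ is a tree, if $S\subseteq V(C_y)$ the Steiner tree of $\{x\}\cup S$ is exactly that of $\{y\}\cup S$ together with the edge $xy$, so the two Steiner distances differ by $1$; symmetrically when $S\subseteq V(C_x)$; and when $S$ meets both $V(T_x)$ and $V(T_y)$ the two Steiner trees coincide. Summing over $S$, the ``$+1$'' sets are exactly the $(k-1)$-subsets of $I(T)\cap V(C_y)=I'(C_y)$ and the ``$-1$'' sets those of $I'(C_x)$, which gives the identity. This step is where essentially all the content lies, and the one place it differs in substance from the $\dd_k$ case: the definition of $\mathsf{dI}_k$ permits the distinguished vertex to lie in the summation set $S$, so one must check that the edge cases $x\in S$ or $y\in S$ (possible only when $x$ or $y$ is itself internal) do not disturb the clean $+1/-1/0$ trichotomy.

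Next, since $m\mapsto\binom{m}{k-1}$ is strictly increasing for $m\ge k-1$ and equals $0$ for $m<k-1$, the comparison formula turns $\mathsf{dI}_k(x)\le\mathsf{dI}_k(y)$ into ``$|I'(C_x)|\ge|I'(C_y)|$, or $|I'(C_x)|,|I'(C_y)|<k-1$'' and turns $\mathsf{dI}_k(x)<\mathsf{dI}_k(y)$ into ``$|I'(C_x)|>|I'(C_y)|$ and $|I'(C_x)|\ge k-1$''. Writing $|I'(C_x)|=|I'(T_x)|+\varepsilon_x$, where $\varepsilon_x=1$ if $x\in I(T)$ and $\varepsilon_x=0$ otherwise (and similarly for $y$), re-expresses these in terms of $|I'(T_x)|$ and $|I'(T_y)|$, which is the form appearing in the statement; carefully tracking whether each endpoint is an internal vertex is where the remaining bookkeeping lives.

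Finally, the local criterion in the lemma says precisely that $\mathsf{dI}_k(u)\le\mathsf{dI}_k(v)$ for every neighbor $v$ of $u$, with strict inequality whenever $v$ is not a Steiner $k$-internal-centroid. By Proposition~\ref{Theorem: Steiner Internal Distance Concavity}, $\mathsf{dI}_k$ is concave along paths, i.e.\ $2\mathsf{dI}_k(b)\le\mathsf{dI}_k(a)+\mathsf{dI}_k(c)$ whenever $ab,bc\in E(T)$; hence along the path from $u$ to an arbitrary vertex $w$ the successive increments of $\mathsf{dI}_k$ are non-decreasing, so if the first of them is nonnegative — which is exactly the statement that $u$ is a local minimizer — then every increment is nonnegative and $\mathsf{dI}_k(w)\ge\mathsf{dI}_k(u)$. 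Thus $u$ minimizes $\mathsf{dI}_k$ locally if and only if it minimizes $\mathsf{dI}_k$ globally, i.e.\ if and only if $u$ is a Steiner $k$-internal-centroid; this gives both directions of the equivalence, and the clause about neighbors that are not centroids is exactly the strict case recorded in the previous paragraph. (Alternatively, one can run the path argument from the proof of Theorem~\ref{Theorem: Steiner centroid prerequisites} directly, with the internal comparison formula in place of Theorem~\ref{Theorem: Steiner distance comparison}.)
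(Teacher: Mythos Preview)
Your strategy is exactly what the paper intends by ``similar arguments'': establish an $\mathsf{dI}_k$-analogue of Theorem~\ref{Theorem: Steiner distance comparison} and then replay (either via concavity or via the path argument) the proof of Theorem~\ref{Theorem: Steiner centroid prerequisites}. Your comparison formula
\[
\mathsf{dI}_k(x)-\mathsf{dI}_k(y)=\binom{|I'(C_y)|}{k-1}-\binom{|I'(C_x)|}{k-1}
\]
is derived correctly, including the check that the edge cases $x\in S$ or $y\in S$ do not disturb the $+1/-1/0$ trichotomy, and the local-to-global step is fine.

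The only real gap is the step you defer as ``bookkeeping'': replacing $|I'(C_x)|$ by $|I'(T_x)|$. This is not merely bookkeeping; the shift $|I'(C_x)|=|I'(T_x)|+\varepsilon_x$ is asymmetric when exactly one endpoint of the edge is internal, and the lemma as literally stated with $T_u,T_v$ fails on small examples. Take $T=K_{1,m}$ with $m\ge 2$, center $c$, and $k=2$: the unique Steiner $2$-internal-centroid is $c$, yet for any leaf $u$ and its sole neighbor $c$ one has $|I'(T_u)|=|I'(T_c)|=0$, so every leaf satisfies the stated criterion, and conversely the ``Also'' clause $|I'(T_c)|\ge k-1$ fails for the actual centroid $c$. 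What your argument actually proves, cleanly and correctly, is the criterion in terms of $|I'(C_u)|$ and $|I'(C_v)|$ (here $0$ versus $1$, which gives the right answer). That is also the version the applications in Section~\ref{Sec:centroids} genuinely use, so the discrepancy lies in the paper's phrasing rather than in your method; you should simply state and prove the $C_u,C_v$ form rather than attempt a translation that cannot be carried out in general.
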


We now analyze distances between $M_k(T)$, $M^{\ell}_k(T)$, and $M^i_k(T)$.

\begin{theorem}\label{Theorem: Leaf-Internal Median Distance Bounds}
On a tree $T$, for the pair of vertices  $u\in V(M_k^{\ell }(T))$ and $v\in V(M_k^i(T))$ that minimizes $\dd(u, v)$,
\begin{equation*}\dd(u, v) \leq \max\{0, n + 1- \max \{k,3\} - \max\{k+1, \dd(u, v) + 2\}\}.
\end{equation*}
\end{theorem}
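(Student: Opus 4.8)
The plan is to work on the path $P(u,v)=w_0w_1\cdots w_d$ with $w_0=u$ and $w_d=v$, where $d:=\dd(u,v)$, and to show the centroid conditions force the two subtrees hanging off its ends to be so large that $d$ cannot be big. If $d=0$ the asserted inequality is immediate, so assume $d\geq 1$. Minimality of $\dd(u,v)$ over the pair gives the structural inputs I need: if $w_1\in V(M^\ell_k(T))$ then the pair $(w_1,v)$ would realise the smaller distance $d-1$ (for $d=1$ this reads $\dd(v,v)=0<1$), so $w_1\notin V(M^\ell_k(T))$; symmetrically $w_{d-1}\notin V(M^i_k(T))$; and since $d\geq 1$ the two medians are disjoint, so $u\notin V(M^i_k(T))$. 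Let $C_u$ be the component of $T-uw_1$ containing $u$ and $C_v$ the component of $T-w_{d-1}v$ containing $v$. Since $V(C_u)$, $V(C_v)$ and $\{w_1,\dots,w_{d-1}\}$ are pairwise disjoint, $n\geq |V(C_u)|+|V(C_v)|+(d-1)$, so it suffices to prove
\begin{equation*}
|V(C_u)|\geq \max\{k,3\}\qquad\text{and}\qquad |V(C_v)|\geq \max\{k+1,\,d+2\};
\end{equation*}
these give $n\geq \max\{k,3\}+\max\{k+1,d+2\}+d-1$, which rearranges to $d\leq n+1-\max\{k,3\}-\max\{k+1,d+2\}$, and the outer $\max\{0,\cdot\}$ in the statement is harmless because $d\geq 1$.

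For $|V(C_u)|$, apply Lemma~\ref{Theorem: Leaf-Centroid prerequisites} to the edge $uw_1$ (legitimate since $u$ is a Steiner $k$-leaf-centroid and $w_1$ is not): it gives $|L'(T_u)|\geq k-1$ and $|L'(T_u)|>|L'(T_{w_1})|$, where $T_u=C_u-u$. I claim $|L'(T_{w_1})|\geq 1$. The component of $T-uw_1$ containing $w_1$ has at least two vertices (it contains $w_1,\dots,w_d$; when $d=1$ it equals $C_v$, and the possibility $C_v=\{v\}$ is excluded by Lemma~\ref{Theorem: Internal-Centroid prerequisites} applied to $uv$, which would force $|I'(T_v)|\geq k-1\geq 1$ out of the empty set), so it has at least two stand-alone leaves, at least one of which differs from $w_1$ and is therefore a leaf of $T$ lying in $T_{w_1}$. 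Hence $|L'(T_u)|\geq \max\{k-1,2\}$, and since $T_u=C_u-u$ contains at least that many vertices, $|V(C_u)|\geq \max\{k-1,2\}+1=\max\{k,3\}$; in particular $C_u\neq\{u\}$, so $u$ is internal in $T$.

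For $|V(C_v)|$, apply Lemma~\ref{Theorem: Internal-Centroid prerequisites} to the edge $w_{d-1}v$: it gives $|I'(T_v)|\geq k-1$ and $|I'(T_v)|>|I'(T_{w_{d-1}})|$, with $T_v=C_v-v$. This strict inequality is where the $d+2$ comes from: $T_{w_{d-1}}$ contains $w_0,w_1,\dots,w_{d-2}$, all internal in $T$ (here $w_0=u$ by the previous paragraph, and $w_1,\dots,w_{d-2}$ are interior path vertices), so $|I'(T_{w_{d-1}})|\geq d-1$, whence $|I'(T_v)|\geq \max\{d,\,k-1\}$ (for $d=1$ using $|I'(T_v)|\geq k-1\geq 1$). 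Each vertex of $C_v-v$ that is internal in $T$ has all of its $T$-neighbours inside $C_v$, hence is internal in the tree $C_v$; as a tree with $m\geq 1$ internal vertices has at least $m+2$ vertices, $|V(C_v)|\geq \max\{d,k-1\}+2=\max\{d+2,\,k+1\}$, as needed, and the two displayed bounds together with $n\geq |V(C_u)|+|V(C_v)|+(d-1)$ finish the proof.

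I expect the main obstacle to be the $\dd(u,v)+2$ inside $\max\{k+1,\dd(u,v)+2\}$: it is not visible in any single local subtree and must be pulled out of the \emph{strict} internal-centroid inequality at $v$, by observing that the entire initial segment $w_0,\dots,w_{d-2}$ of the path consists of internal vertices. The remaining subtleties are pure bookkeeping — the small-$k$ cases where $\max\{k,3\}$ requires the extra ``$+1$'' coming from $C_u-u$ having two leaves of $T$, and the boundary case $d=1$ where one invokes the internal-centroid condition at $v$ to prevent $C_v$ from collapsing to a single vertex — and involve no serious computation.
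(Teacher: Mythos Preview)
Your proof is correct and follows essentially the same route as the paper: remove the first and last edges of $P(u,v)$, apply Lemma~\ref{Theorem: Leaf-Centroid prerequisites} at $uw_1$ and Lemma~\ref{Theorem: Internal-Centroid prerequisites} at $w_{d-1}v$ to force $|V(C_u)|\geq\max\{k,3\}$ and $|V(C_v)|\geq\max\{k+1,d+2\}$, and then count vertices. If anything you are more careful than the paper in justifying the auxiliary facts (that $u$ is internal so that $w_0,\dots,w_{d-2}$ are all in $I'(T_{w_{d-1}})$, that the $d=1$ case cannot collapse $C_v$ to a point, and that $T$-internal vertices of $C_v-v$ remain internal in $C_v$ so the ``$+2$'' follows from the leaf count of a tree).
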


\begin{proof}
Consider a tree $T$ that maximizes the distance between the two medians where the distance is defined as $\dd(u,v)$ with $u\in V(M_k^{\ell }(T))$ and $v\in V(M_k^i(T))$ such that $\dd(u,v)$ is minimized. If $u = v$, then $\dd(u, v) = 0$. Else, let $u'$ and $v'$ be the adjacent vertices of $u$ and $v$ respectively on $P(u, v)$ and let $C_u$ and $C_v$ be the connected components containing $u$ and $v$ respectively in $T-E(P(u,v))$.

Note that since $u' \notin V(M_k^{\ell}(T))$, by Lemma \ref{Theorem: Leaf-Centroid prerequisites}, 
\begin{equation*}
    |L'(C_u - u)| \geq k-1 \textup{ and }  |L'(C_u - u)| > |L'(T-C_u-u')|.
\end{equation*}
Note that $|L'(T-C_u-u')| \geq 1$ as otherwise $u' \in L(T)$, implying $u=v$. Thus, $V(C_u) \geq \max \{k,3\}$.

Additionally since $v' \notin V(M_k^{i}(T))$, by Lemma \ref{Theorem: Internal-Centroid prerequisites}, 
\begin{equation*}
    |I'(C_v-v)| \geq k-1 \textup{ and }  |I'(C_v -v)| >  |I'(T-C_v-v') | \geq \dd(u, v)-1.
\end{equation*}
Thus, $|V(C_v)| \geq \max \{k+1, \dd(u,v) + 2\}$.

Thus we have 
\begin{align*}
    n &= |V(T)|\\
     &\geq |V(C_u)| + |V(C_v)| + \dd(u, v) - 1\\
      &\geq \max \{k,3\} + \max \{ k+1, \dd(u, v) + 2 \} + \dd(u, v) - 1.
\end{align*}
Thus, 
\begin{equation*}
     \dd(u, v) \leq n + 1 - \max \{k,3\} - \max\{k+1, \dd(u, v) + 2\}.
\end{equation*}
This concludes our proof. 
\end{proof}
\begin{theorem} \label{Theorem: Leaf-Normal Median Distance Bound}
For a tree $T$ and $k \leq \frac{n+1}{2}$, the minimal distance between a vertex  $u\in V(M_k^{\ell }(T))$ and a vertex $v\in V(M_k(T))$ is bounded
\begin{equation*}
\dd(u, v) \leq \frac{n-2\max \{k,3\}+1}{2}.
\end{equation*}
\end{theorem}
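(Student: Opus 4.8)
The plan is to follow the template of the proof of Theorem~\ref{Theorem: Leaf-Internal Median Distance Bounds}, replacing the internal-median analysis at the second endpoint by the median criterion of Theorem~\ref{Theorem: Steiner centroid prerequisites}. Fix the pair $u\in V(M^\ell_k(T))$, $v\in V(M_k(T))$ realizing the minimum distance. If $u=v$ then $\dd(u,v)=0$ and, since $k\le\frac{n+1}{2}$ makes the right-hand side nonnegative, we are done; so assume $u\ne v$. Let $u'$ and $v'$ be the neighbors of $u$ and $v$ on $P(u,v)$, and let $C_u$, $C_v$ be the components of $u$ and $v$ in $T-E(P(u,v))$. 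Since $u$ and $v$ each meet $P(u,v)$ in exactly one edge, $C_u$ is also the component of $u$ in $T-uu'$ and $C_v$ the component of $v$ in $T-vv'$; in particular $n_{vv'}(v)=|V(C_v)|$.

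At the leaf end, minimality of $\dd(u,v)$ forces $u'\notin V(M^\ell_k(T))$, so Lemma~\ref{Theorem: Leaf-Centroid prerequisites} gives $|L'(C_u-u)|\ge k-1$ and $|L'(C_u-u)|>|L'(T-C_u-u')|$. Every component of the forest $T-C_u-u'$ is a subtree of $T$ attached to $u'$ by at most one edge and hence contains a leaf of $T$; also $T-C_u-u'=\emptyset$ is impossible, since it would force $v=u'$ to be a leaf of $T$, contradicting that $v$ is $k$-satisfactory (a leaf cannot be $k$-satisfactory when $k\le\frac{n+1}{2}$). Thus $|L'(T-C_u-u')|\ge 1$, so $|L'(C_u-u)|\ge 2$, and therefore $|V(C_u)|\ge\max\{k,3\}$, exactly as in Theorem~\ref{Theorem: Leaf-Internal Median Distance Bounds}.

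At the median end, minimality again gives $v'\notin V(M_k(T))$, so Theorem~\ref{Theorem: Steiner centroid prerequisites} and the definition of $k$-satisfactory yield $n_{vv'}(v)>n_{vv'}(v')$. The complementary side $V(T)\setminus V(C_v)$ of the edge $vv'$, which has size $n_{vv'}(v')=n-|V(C_v)|$, contains all of $C_u$ together with the $\dd(u,v)-1$ internal vertices of $P(u,v)$ and their pendant subtrees, so $n_{vv'}(v')\ge|V(C_u)|+\dd(u,v)-1\ge\max\{k,3\}+\dd(u,v)-1$. Since $n=|V(C_v)|+n_{vv'}(v')$ and $|V(C_v)|=n_{vv'}(v)>n_{vv'}(v')$, we obtain $n\ge 2\,n_{vv'}(v')+1\ge 2\max\{k,3\}+2\dd(u,v)-1$, which rearranges to $\dd(u,v)\le\frac{n-2\max\{k,3\}+1}{2}$.

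The substance lies in the median-end step. Unlike in Theorem~\ref{Theorem: Leaf-Internal Median Distance Bounds}, where the internal-median condition only yields a one-sided count of internal vertices inside $C_v$, the $k$-satisfactory inequality $n_{vv'}(v)>n_{vv'}(v')$ compares the total sizes of the two sides of $vv'$, and feeding into it the lower bound $|V(C_u)|\ge\max\{k,3\}$ from the leaf end is precisely what produces the factor of two in the denominator. The only points needing care are the identification $n_{vv'}(v)=|V(C_v)|$ (i.e., that deleting the remaining edges of $P(u,v)$ does not further split $v$'s component) and the routine verification that $v$ is not a leaf.
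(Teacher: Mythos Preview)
Your proof is correct and follows essentially the same approach as the paper: bound $|V(C_u)|\ge\max\{k,3\}$ at the leaf-median end via Lemma~\ref{Theorem: Leaf-Centroid prerequisites}, then use the $k$-satisfactory criterion at $v$ to get $|V(C_v)|>|V(T-C_v)|\ge|V(C_u)|+\dd(u,v)-1$, and sum. You supply a bit more justification than the paper does (the identification $n_{vv'}(v)=|V(C_v)|$, the non-emptiness of $T-C_u-u'$, and why a leaf cannot be a Steiner $k$-centroid under $k\le\frac{n+1}{2}$), but the argument is the same.
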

\begin{proof}
Consider a tree $T$ that maximizes the distance between the two medians, where the distance is defined as $\dd(u,v)$ with $u\in V(M_k^{\ell }(T))$ and $v\in V(M_k(T))$ such that $\dd(u,v)$ is minimized. If $u = v$, then $\dd(u, v) = 0$. Else, let $u'$ and $v'$ respectively be the adjacent vertices of $u$ and $v$ on $P(u, v)$. Also let $C_u$ and $C_v$ be the connected components containing $u$ and $v$ respectively in $T-E(P(u,v))$.

From Theorem \ref{Theorem: Leaf-Internal Median Distance Bounds} we have, $  |V(C_u)| \geq \max \{k,3\}$. Additionally by Lemma \ref{Theorem: Steiner centroid prerequisites},
\begin{equation*}
    |V(C_v)| > |V(T-C_v)| \geq |V(C_u)| + \dd(u,v) - 1 \geq \max \{k,3\} + \dd(u,v) - 1.  
\end{equation*}
Thus, \
\begin{equation*}
    n \geq |V(C_u)|+|V(C_v)| + \dd(u,v) -1 \geq 2\max \{k,3\} + 2\dd(u,v)-1,
\end{equation*}
and the result follows.
\end{proof}
\begin{theorem}
For a tree $T$ and $k \leq \frac{n-1}{2}$, the minimum distance between a vertex  $u\in V(M_k^{i}(T))$ and a vertex $v\in V(M_k(T))$ is bounded
\begin{equation*}
  \dd(u,v) \leq \frac{n - 2 \max \{ k+1, \dd(u, v) + 2 \} + 1}{2}.
\end{equation*}
\end{theorem}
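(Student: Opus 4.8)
The plan is to imitate the proof of Theorem~\ref{Theorem: Leaf-Normal Median Distance Bound}, now with the Steiner $k$-internal-median in the role the Steiner $k$-leaf-median played there, and with $\max\{k,3\}$ replaced by $\max\{k+1,\dd(u,v)+2\}$ — the quantity that, in Theorem~\ref{Theorem: Leaf-Internal Median Distance Bounds}, controls the size of the component on the internal-median side. Fix $u\in V(M_k^i(T))$ and $v\in V(M_k(T))$ minimizing $\dd(u,v)$. If $u=v$, then $\dd(u,v)=0$ and the claimed bound reads $0\le\frac{n-2(k+1)+1}{2}=\frac{n-2k-1}{2}$, which holds precisely because $k\le\frac{n-1}{2}$. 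So assume $u\ne v$; let $u',v'$ be the neighbours of $u,v$ on $P(u,v)$ and $C_u,C_v$ the components of $T-E(P(u,v))$ containing $u,v$ (equivalently, the component of $u$ in $T-uu'$ and of $v$ in $T-vv'$).

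Next I would lower-bound $|V(C_u)|$ exactly as in the proof of Theorem~\ref{Theorem: Leaf-Internal Median Distance Bounds}. Minimality of $\dd(u,v)$ forces $u'\notin V(M_k^i(T))$ (otherwise $(u',v)$ would be a strictly closer admissible pair), so Lemma~\ref{Theorem: Internal-Centroid prerequisites} applied to the edge $uu'$ gives $|I'(C_u-u)|\ge k-1$ and $|I'(C_u-u)|>|I'(T-C_u-u')|$. Since $T-C_u-u'$ contains all path vertices strictly beyond $u'$, and since $v$ itself is an internal vertex of $T$ — under $k\le\frac{n-1}{2}$ no leaf is $k$-satisfactory, so $M_k(T)$ contains no leaf — we get $|I'(T-C_u-u')|\ge\dd(u,v)-1$, and hence, together with the ``extra leaf at the far end of $C_u$'' observation from the proof of Theorem~\ref{Theorem: Leaf-Internal Median Distance Bounds}, $|V(C_u)|\ge\max\{k+1,\dd(u,v)+2\}$.

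Then I would bring in the fact that $v$ is an honest Steiner $k$-centroid: minimality gives $v'\notin V(M_k(T))$, so the $k$-satisfactory criterion (Theorem~\ref{Theorem: Steiner centroid prerequisites}) yields $n_{vv'}(v)>n_{vv'}(v')$, i.e.\ $|V(C_v)|>|V(T-C_v)|$. As $T-C_v$ contains $C_u$ together with the $\dd(u,v)-1$ path vertices strictly between $u$ and $v$, we get $|V(T-C_v)|\ge|V(C_u)|+\dd(u,v)-1$, so by integrality $|V(C_v)|\ge|V(C_u)|+\dd(u,v)$. Since $V(T)$ is the disjoint union of $V(C_u)$, $V(C_v)$ and those $\dd(u,v)-1$ path vertices,
\begin{align*}
n\ &\ge\ |V(C_u)|+|V(C_v)|+\dd(u,v)-1\\
&\ge\ 2\max\{k+1,\dd(u,v)+2\}+2\,\dd(u,v)-1,
\end{align*}
and rearranging gives $\dd(u,v)\le\frac{n-2\max\{k+1,\dd(u,v)+2\}+1}{2}$, as claimed.

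There is no deep obstacle here — all the machinery (Theorem~\ref{Theorem: Leaf-Internal Median Distance Bounds}, Lemma~\ref{Theorem: Internal-Centroid prerequisites}, Theorem~\ref{Theorem: Steiner centroid prerequisites}) is already available — so the work is in getting the transfer right. The point most in need of care, and the closest thing to a ``hard step'', is the path-vertex count $|I'(T-C_u-u')|\ge\dd(u,v)-1$: this needs $v$ to be an internal vertex of $T$, which is exactly where the hypothesis $k\le\frac{n-1}{2}$ genuinely enters (it also secures the degenerate case $u=v$). The rest is bookkeeping identical to that in Theorems~\ref{Theorem: Leaf-Internal Median Distance Bounds} and \ref{Theorem: Leaf-Normal Median Distance Bound}.
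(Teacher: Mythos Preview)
Your proposal is correct and follows essentially the same approach as the paper's proof: bound $|V(C_u)|$ via Lemma~\ref{Theorem: Internal-Centroid prerequisites} (exactly as in Theorem~\ref{Theorem: Leaf-Internal Median Distance Bounds}), bound $|V(C_v)|$ via Theorem~\ref{Theorem: Steiner centroid prerequisites}, and add up. You are in fact more careful than the paper in two places --- you explicitly check that the inequality holds in the degenerate case $u=v$ (this is where $k\le\frac{n-1}{2}$ is needed), and you justify why $v$ must be an internal vertex so that the count $|I'(T-C_u-u')|\ge\dd(u,v)-1$ goes through, whereas the paper simply cites Theorem~\ref{Theorem: Leaf-Internal Median Distance Bounds} for the bound on $|V(C_u)|$ without revisiting that point.
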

\begin{proof}
Consider a tree $T$ that maximizes the distance between two medians, where the distance is defined as $\dd(u,v)$ with $ u \in V(M_k^{i}(T))$ and $v\in V(M_k(T))$ such that $\dd(u,v)$ is minimized. If $u = v$, then $\dd(u, v) = 0$. Else, let $u'$ and $v'$ respectively be the adjacent vertices of $u$ and $v$ on $P(u, v)$. Also let $C_u$ and $C_v$ be the connected components containing $u$ and $v$ respectively in $T-E(P(u,v))$.

From Theorem \ref{Theorem: Leaf-Internal Median Distance Bounds} we have 
\begin{equation*}
    |V(C_u)| \geq \max \{ k + 1, \dd(u, v) + 2 \}.
\end{equation*}
Additionally by Lemma \ref{Theorem: Steiner centroid prerequisites}, 
\begin{equation*}
    |V(C_v)| > |V(T-C_v)| \geq |V(C_u)| + \dd(u,v) -1\geq \max \{k+1, \dd(u, v) + 2 \} + \dd(u, v)-1.  
\end{equation*}
Thus, 
\begin{equation*}
    n \geq |V(C_u)|+|V(C_v)| + \dd(u,v) - 1 \geq 2 \max \{ k+1, \dd(u, v) + 2 \} + 2\dd(u, v) - 1.
\end{equation*}
and the result follows.
\end{proof}

\begin{remark}
The bounds provided in this section are not sharp. The specific conditions for equalities usually depend on the value of $k$, and they involve rather tedious cases. We chose to omit the technical details for this reason.
\end{remark}

\section{Extremal Ratios}\label{Sec:ratio}
In this section we consider ratios between the Steiner $k$-distances of specific vertices and between the Steiner $k$-distance of a Steiner $k$-centroid and the Steiner $k$-Wiener index. For a vertex $v$ within a tree $T$, we define the branches at $v$ as the subtrees of $T$ with $v$ as a leaf. 

\subsection{Extremal ratios between similar vertices}

\begin{theorem}\label{Theorem: Bounds for ratio of leafs}
Let $F = 1 +  \frac{\binom{n-2}{k} - \binom{n-r}{k} - \binom{r-1}{k}}{\binom{n-r}{k} + (n-r-1)\binom{n-2}{k-2} + r\binom{n-1}{k-1} - \binom{n-1}{k}}$ where $1 \leq r \leq n-1$ is an integer that maximizes $F$. In a tree $T$ on $n$ vertices with $w,v \in L(T)$, we have
\begin{equation*}
   \frac{1}{F} \leq \frac{\dd_k(v)}{\dd_k(w)} \leq F.
\end{equation*}
 The maximum bound is achieved when $T$ is a $r$-comet where $w = a$ and $v=b$ as in Definition \ref{Def: r-comet and vertices}, and the minimum bound is achieved when $w$ and $v$ switch places.
\end{theorem}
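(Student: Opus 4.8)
### Proof Proposal

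The plan is to bound the ratio $\dd_k(v)/\dd_k(w)$ for two leaves $v, w$ of a tree $T$ on $n$ vertices by reducing to an extremal configuration and then computing the relevant Steiner $k$-distances explicitly on that configuration. The first step is a symmetry reduction: since $F \geq 1$ by construction (take $r = n-1$, which makes the numerator of the fractional term zero, so $F = 1$ is always attainable), it suffices to prove the upper bound $\dd_k(v)/\dd_k(w) \leq F$; the lower bound $1/F \leq \dd_k(v)/\dd_k(w)$ then follows by swapping the roles of $v$ and $w$. So I would fix attention on maximizing $\dd_k(v)/\dd_k(w)$ over all trees $T$ and all ordered pairs of leaves $(w,v)$.

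The core of the argument is a \emph{shifting} or \emph{compression} lemma: I would show that, holding $n$ fixed, any tree maximizing $\dd_k(v)/\dd_k(w)$ can be transformed into a comet without decreasing the ratio. Intuitively, to make $\dd_k(v)$ large we want $v$ to be ``far from everything'' — at the tip of a long path — while to make $\dd_k(w)$ small we want $w$ to hang directly off the centroid-like hub where many leaves are clustered. This suggests the extremal tree is an $r$-comet with $w = a$ (one of the clustered leaves) and $v = b$ (the tip of the path of length $r-1$), exactly as claimed. To make this rigorous I would argue in two moves: (i) given any tree, repeatedly apply a local operation that moves mass off ``side branches'' and onto either the path containing $v$ or the star of leaves containing $w$, checking via the Steiner-distance bookkeeping (of the type carried out in Proposition~\ref{Theorem: Steiner Distance Concavity}, and using Theorem~\ref{Theorem: Steiner distance comparison} to track how $\dd_k$ changes under edge contractions/subdivisions) that the numerator does not decrease and the denominator does not increase; (ii) conclude that the optimum is attained on the one-parameter family of $r$-comets, $1 \le r \le n-1$.

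The final step is the explicit computation on the $r$-comet. Here one computes $\dd_k(b)$ and $\dd_k(a)$ by classifying the $k$-subsets $S$ containing the distinguished vertex according to which ``part'' of the comet they meet — the $n-r$ pendant leaves, the vertex $d$, and the path vertices. For $\dd_k(a)$: a $k$-set containing $a$ whose other $k-1$ vertices all lie among the pendant leaves and nothing on the path contributes Steiner distance essentially $0$ relative to... — more carefully, one gets terms $\binom{n-r}{k}$, $(n-r-1)\binom{n-2}{k-2}$, $r\binom{n-1}{k-1}$, and $-\binom{n-1}{k}$ after inclusion–exclusion to avoid double counting sets that reach the path, which is precisely the denominator of the fractional term in $F$; and for $\dd_k(b)$ one gets an extra contribution of $\binom{n-2}{k} - \binom{n-r}{k} - \binom{r-1}{k}$ coming from sets that must traverse the full path, giving $\dd_k(b) = F \cdot \dd_k(a)$. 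Matching these to the stated formula for $F$ finishes the proof, and choosing $r$ to maximize $F$ gives the claimed bound.

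I expect the shifting lemma in step two to be the main obstacle: one must design a single local move that \emph{simultaneously} (weakly) increases $\dd_k(v)$ and (weakly) decreases $\dd_k(w)$, and verify monotonicity of the ratio under it. The Steiner-distance accounting is delicate because moving a subtree changes the Steiner distance of a $k$-set in a way that depends on whether that set already ``spans'' the affected region, so the case analysis (paralleling the $S^{k-1}_{a_1,\dots;b_1,\dots}$ partition from Section~\ref{Sec:concavity}) is where the real work lies; the comet computation itself, while tedious, is routine inclusion–exclusion once the extremal structure is pinned down.
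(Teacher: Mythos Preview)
Your approach is essentially the same as the paper's: reduce by symmetry to the upper bound, use a local re-attachment move to show the extremal tree is an $r$-comet, then compute $\dd_k(a)$ and $\dd_k(b)$ explicitly. The paper's shifting step is in fact simpler than you anticipate --- the single move is to detach any branch hanging off a path vertex $w_i$ with $i\neq 1$ and re-attach it at $w_1$ (the neighbor of $w$), which weakly increases $\dd_k(v)$ and weakly decreases $\dd_k(w)$; once everything sits at $w_1$, the difference $\dd_k(v)-\dd_k(w)$ depends only on $(n,r)$ via Theorem~\ref{Theorem: Steiner distance comparison}, so the problem decouples and one just minimizes $\dd_k(w)$ by forcing those branches to be single leaves.
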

\begin{proof}
Note that it suffices to prove the maximum bound as the minimum follows by swapping the positions of $w$ and $v$. Let $T$ be a tree with $w,v \in L(T)$  that maximizes $\frac{\dd_k(v)}{\dd_k(w)}$. First, denote the path $w = w_0w_1 \dots w_r = v$ as $P$. Under our prior restrictions, let $T$ maximize the degree of $w_1$. We  claim that all vertices not in $P$ must lie in a branch of $w_1$ that contains no vertices in $P$ besides $w_1$. Assume for contradiction that there is a vertex $q$ not in $P$ that is adjacent to $w_i$ where $i \neq 1$. Then let 
$T' = T - w_iq+w_1q$. Note that 
\begin{equation*}
\dd^T_k(w) \geq \dd^{T'}_k(w) \textup{ and }\dd^T_k(v) \leq \dd^{T'}_k(v),  \end{equation*}
a contradiction as $w_1$ has higher degree in $T'$ and $\frac{\dd^{T}_k(v)}{\dd^{T}_k(w)} \leq \frac{\dd^{T'}_k(v)}{\dd^{T'}_k(w)}$. 

Now, since all vertices in $T$ either lie in $P$ or a branch of $w_1$ with no vertices of $P$ besides $w_1$, we have $\dd_k(w_0) = \dd_k(w_1) + \binom{n-2}{k-1},$ and for $i \geq 0$, $ \dd_k(w_{i+2}) = \dd_k(w_{i+1}) + \binom{n-r+i}{k-1} - \binom{r-i-2}{k-1}. $
Thus, 
\begin{align*}
\dd_k(w_r)& = \dd_k(w_1) + \binom{n-r}{k-1} + \binom{n-r+1}{k-1} + \dots + \binom{n-2}{k-1} \\
 & \quad\quad\quad - \binom{r-2}{k-1} - \binom{r-3}{k-1} - \dots - \binom{k-1}{k-1}\\
& = \dd_k(w_1) + \binom{n-1}{k} - \binom{n-r}{k} - \binom{r-1}{k}\\
& = \dd_k(w_0) + \binom{n-2}{k} - \binom{n-r}{k} - \binom{r-1}{k}.
\end{align*}
So,  
\begin{equation*}
\frac{\dd_k(v)}{\dd_k(w)} = 1 +  \frac{1}{\dd_k(w)}
\left(\binom{n-2}{k} - \binom{n-r}{k} - \binom{r-1}{k}\right).
\end{equation*}
So, as we must minimize $\dd_k(w)$, we note that all vertices adjacent $w_1$ not on $P$ must have degree 1, giving us Figure \ref{figure41}.
\begin{figure}[hbt]
\begin{center}
\begin{tikzpicture}[scale = 0.7]
\draw (1,-1) -- (3,-1)
 node [draw=black, circle, fill=black, inner sep=2pt, pos=0]  {}
  node [draw=black, circle, fill=black, inner sep=2pt, pos=1]  {};
\draw  (3,-1) --(5, -1) 
    node [draw=black, circle, fill=black, inner sep=2pt, pos=0]  {};
 \draw [dashed](5,-1) -- (8,-1) 
    node [draw=black, circle, fill=black, inner sep=2pt, pos=0]  {}
  node [draw=black, circle, fill=black, inner sep=2pt, pos=1]  {};
   \draw (8,-1) -- (10,-1)
     node [draw=black, circle, fill=black, inner sep=2pt, pos=0]  {}
  node [draw=black, circle, fill=black, inner sep=2pt, pos=1]  {};
 
\draw (1.5,-3) -- (3,-1) 
 node [draw=black, circle, fill=black, inner sep=2pt, pos=0]  {};
\draw (2.5,-3) -- (3,-1) 
node [draw=black, circle, fill=black, inner sep=2pt, pos=0]  {};
 %\draw [loosely dashed](2.5,-3) -- (5,-3) ;
\draw  (5, -3) -- (3,-1)
 node [draw=black, circle, fill=black, inner sep=2pt, pos=1]  {}
  node [draw=black, circle, fill=black, inner sep=2pt, pos=0]  {};

% \draw (1,-5) -- (0,-6.5) -- (2,-6.5) -- (1,-5);
\node [right] at (2.8, -3) {$\large \ldots \ldots $};
  \node [above] at (1, -1) {$w$};
    \node [above] at (3, -1) {$w_1$};
      \node [above] at (5, -1) {$w_2$};
        \node [above] at (8, -1) {$w_{r-1}$};
  \node [above] at (10, -1) {$v$};
  
\end{tikzpicture}
\end{center}
\caption{A graph maximizing $\frac{\dd_k(v)}{\dd_k(w)}$}
\label{figure41}
\end{figure}
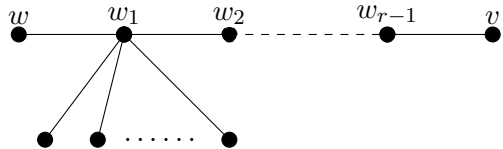

By casework on the number of leaves adjacent to $w_1$ chosen to be in a set of $k-1$ vertices that does not include $w$, we have
\begin{align*}
    \dd_k(w) & = \binom{n-r-1}{k-1}(k) + \sum_{i = 0}^{k-2}\sum_{j = 0}^{r - 1- i} \binom{n-r-1}{k-2-i}(r + k -2 -i-j)\binom{r - 1-j}{i}.
\end{align*}

\begin{comment}
\begin{align*}
    \dd_k(w) & = \binom{n-r-1}{k-1}(k)\\
    &+  \binom{n-r-1}{k-2}\left((r+k-2)\binom{r-1}{0} + (r+k-3)\binom{r-2}{0} + \cdots + (k-1)\binom{0}{0} \right)\\
    &+  \binom{n-r-1}{k-3}\left((r+k-3)\binom{r-1}{1} + (r+k-4)\binom{r-2}{1} + \cdots + (k-1)\binom{1}{1} \right)\\
    &+ \cdots \\
    &+ \binom{n-r-1}{0}\left((r)\binom{r-1}{k-2} + (r-1)\binom{r-2}{k-2} + \cdots + (k-1)\binom{k-2}{k-2}\right).
\end{align*}
\end{comment}

Simplifying using the Hockey Stick Identity and rearranging, we get
\begin{align*}
    \dd_k(w) = &\binom{n-r-1}{k-1} + \binom{n-r-1}{k-1}\left((r+k-1)\binom{r}{0} - \binom{r}{1}\right)
   \\
   & \quad\quad\quad +  \binom{n-r-1}{k-2}\left((r+k-2)\binom{r}{1} - \binom{r}{2}\right)\\ 
    & \quad\quad\quad + \binom{n-r-2}{k-3}\left((r+k-3)\binom
    {r}{2} - \binom{r}{3}\right)  \\
     & \quad\quad\quad \cdots +  \binom{n-r-1}{0}\left((r)\binom{r}{k-1} - \binom{r}{k}\right).
\end{align*}

Applying Vandermonde's Identity and simplifying, we have
\begin{equation*}
  \dd_k(w) = \binom{n-r}{k} + (n-r-1)\binom{n-2}{k-2} + r\binom{n-1}{k-1} - \binom{n-1}{k},
\end{equation*}
and our proof is complete.
\end{proof}

\begin{theorem} \label{Theorem: Bounds for internal vertices}
Let $F = 1 +  \frac{\binom{n-2}{k} - \binom{n -r - 2}{k} -\binom{r + 1}{k}}{ (n-r-2)\binom{n-2}{k-2} + (r+1)\binom{n-1}{k-1}
+ \binom{n-r-2}{k}
- \binom{n-1}{k}}$, where $1 \leq r \leq n-2$ is an arbitrary positive integer that maximizes $F$. In a tree $T$ with $w,v \in I(T)$, we have 
\begin{equation*}
   \frac{1}{F} \leq \frac{\dd_k(v)}{\dd_k(w)} \leq F.
\end{equation*}
The maximum bound is achieved when $T$ is a $(r+2)-$comet where $w=d$ and $v=c$ as in Definition \ref{Def: r-comet and vertices}. The minimum bound is achieved when $w$ and $v$ are swapped.
\end{theorem}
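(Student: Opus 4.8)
The plan is to mirror the structure of the proof of Theorem~\ref{Theorem: Bounds for ratio of leafs}, replacing the pair of leaves by a pair of internal vertices. As before, it suffices to establish the upper bound $\frac{\dd_k(v)}{\dd_k(w)} \le F$, since swapping $w$ and $v$ gives the lower bound. So I would fix a tree $T$ with $w,v \in I(T)$ maximizing $\frac{\dd_k(v)}{\dd_k(w)}$, and among all such, maximizing the degree of $w_1$, where $P\colon w=w_0w_1\cdots w_{r'}=v$ is the $w$--$v$ path (I expect the relevant path length to be $r+1$ so that the comet is a $(r+2)$-comet, matching the statement). The first key step is the same edge-shifting argument: if some vertex $q\notin P$ is attached to $w_i$ with $i\neq 1$, move that branch to $w_1$ via $T'=T-w_iq+w_1q$; one checks $\dd_k^{T}(w)\ge \dd_k^{T'}(w)$ and $\dd_k^{T}(v)\le \dd_k^{T'}(v)$, contradicting maximality of the ratio (or of $\deg w_1$). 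This forces every vertex off $P$ to lie in a single branch at $w_1$. The one subtlety here, compared to the leaf case, is that $w$ and $v$ must remain \emph{internal}, so the branch at $w_1$ cannot be empty and the path must have its endpoints be genuine internal vertices; this is automatically respected because $w,v\in I(T)$ is a hypothesis and the shifting operation does not turn an internal vertex into a leaf.

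The second step is to compute $\dd_k(v)-\dd_k(w)$ using Theorem~\ref{Theorem: Steiner distance comparison}. Walking along $P$ from $w_0$ to $w_{r'}$, each step contributes $\binom{n_{w_iw_{i+1}}(w_{i+1})-1}{k-1}-\binom{n_{w_iw_{i+1}}(w_i)-1}{k-1}$; since all off-path vertices sit at the $w_1$ end, the sizes $n_{\cdot}(\cdot)-1$ run through $n-r-2,\ldots,n-2$ on one side and $r+1,\ldots,k-1$ (or the appropriate truncated range) on the other, and the telescoping sum collapses via the Hockey Stick Identity to $\binom{n-2}{k}-\binom{n-r-2}{k}-\binom{r+1}{k}$. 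This matches the numerator in $F$. Hence $\frac{\dd_k(v)}{\dd_k(w)}=1+\frac{1}{\dd_k(w)}\big(\binom{n-2}{k}-\binom{n-r-2}{k}-\binom{r+1}{k}\big)$, and maximizing the ratio is the same as minimizing $\dd_k(w)$, which forces every off-path neighbor of $w_1$ to be a leaf, i.e. $T$ is exactly a $(r+2)$-comet with $w=d$, $v=c$.

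The third step — and the one I expect to be the main obstacle — is the explicit evaluation of $\dd_k(w)=\dd_k(d)$ for the $(r+2)$-comet, yielding $(n-r-2)\binom{n-2}{k-2}+(r+1)\binom{n-1}{k-1}+\binom{n-r-2}{k}-\binom{n-1}{k}$. As in the previous proof, I would organize the computation by casework on how many of the $n-r-2$ leaves adjacent to $d$ are included in a chosen $(k-1)$-subset, writing $\dd_k(w)$ as a double sum of the form $\sum_i\sum_j \binom{n-r-2}{k-2-i}(\text{path length term})\binom{r-j}{i}$, then collapsing the inner sums with the Hockey Stick Identity and the outer combination with Vandermonde's Identity. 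The bookkeeping of the index ranges and the "$+k-2-i-j$" distance weights is delicate and is where sign errors or off-by-one errors are most likely; I would double-check the final closed form against a small case such as $n=5$, $k=2$, $r=1$. Once $\dd_k(w)$ is in closed form, substituting into the displayed identity for the ratio completes the proof.
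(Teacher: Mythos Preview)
Your shifting argument targets the wrong vertex. You copy Theorem~\ref{Theorem: Bounds for ratio of leafs} verbatim and shift every off-path branch to $w_1$, but in that theorem $w=w_0$ is a \emph{leaf}, so $w_1$ is as close to $w$ as one can attach anything. Here $w=w_0$ is internal, and the paper shifts to $w_0$ itself (maximizing $\deg w$, not $\deg w_1$). This matters: your operation ``if $q\notin P$ is attached to $w_i$ with $i\ne 1$, set $T'=T-w_iq+w_1q$'' is applied in particular when $i=0$, and then both claimed inequalities are false. Moving a branch from $w_0$ to $w_1$ pushes it \emph{away} from $w$ and \emph{toward} $v$, so $\dd_k^{T'}(w)\ge \dd_k^{T}(w)$ and $\dd_k^{T'}(v)\le \dd_k^{T}(v)$, the reverse of what you need. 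Your sentence ``the shifting operation does not turn an internal vertex into a leaf'' is also wrong as written: if $w_0$ has exactly one off-path neighbour $q$ (which is generically the case) and you shift $q$ to $w_1$, then $w_0$ has degree~$1$.

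The fix is exactly what the paper does: shift every off-path branch to $w=w_0$ instead. Then $\dd_k^{T}(w)\ge \dd_k^{T'}(w)$ and $\dd_k^{T}(v)\le \dd_k^{T'}(v)$ hold for all $i\ge 1$, and $w$ stays internal because it only gains neighbours. The genuine ``internal'' subtlety is at the \emph{other} end: one must retain a single off-path leaf $w_{r+1}$ adjacent to $v$ so that $v$ remains internal; this is why the extremal tree is an $(r+2)$-comet with $w=d$ at the bushy end and $v=c$ one step in from the far leaf $b$. With that correction, your steps two and three (the telescoping sum via Theorem~\ref{Theorem: Steiner distance comparison} and the Hockey Stick/Vandermonde evaluation of $\dd_k(w)$) go through as in the paper and give the stated $F$. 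Note also that your structure, had it worked, would not even produce the numerator $\binom{n-2}{k}-\binom{n-r-2}{k}-\binom{r+1}{k}$, since that value comes from having $n-r-2$ off-path vertices at $w_0$ and one at $w_r$, not a lump at $w_1$.
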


\begin{proof}
Note that it suffices to find the maximum value of $\frac{\dd_k(v)}{\dd_k(w)}$ as we can swap $w$ and $v$ to obtain the minimum. Let $T$ be a tree with $w,v \in I(T)$ maximizing $\frac{\dd_k(v)}{\dd_k(w)}$. Denote the path $w = w_0w_1 \dots w_r = v$ as $P$. Set $T$ to be the tree that maximizes the degree of $w$ under the prior restrictions and let $C_w$ be the connected component containing $w$ in $T-w_0w_1$. Note that since $v$ is an internal vertex, we must have a vertex $w_{r+1}$ not on $P$ that is adjacent to $v$. By using similar logic to Theorem \ref{Theorem: Bounds for ratio of leafs}, we have that all other vertices either lie in $P$ or $C_w$ as shown in Figure \ref{fig:internal-internal-ratio}. 
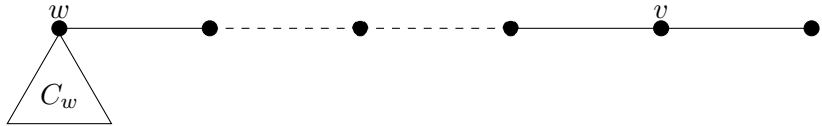
\begin{figure}[htb]
\begin{center}
\begin{tikzpicture}

 \draw (2,0) -- (4,0) 
  node [draw=black, circle, fill=black, inner sep=2pt, pos=0] {}
 node [draw=black, circle, fill=black, inner sep=2pt, pos=1]  {};
\node [above] at (2,0) {$w$};
 \draw [dashed](4,0) -- (6,0) 
  node [draw=black, circle, fill=black, inner sep=2pt, pos=1]  {};
 \draw [dashed](6,0) -- (8,0) 
 node [draw=black, circle, fill=black, inner sep=2pt, pos=1]  {};
 \draw (8,0) -- (10,0) 
 node [draw=black, circle, fill=black, inner sep=2pt, pos=1]  {};
  \draw (10,0) -- (12,0) 
 node [draw=black, circle, fill=black, inner sep=2pt, pos=1]  {};
 \node [above] at (10,0) {$v$};
 \node[isosceles triangle,isosceles triangle apex angle=60,
    draw,
    rotate=90,
    minimum size =1.2cm] (T1)at (2,-0.9){};

% \draw (2,0) -- (1,-2) -- (3,-2) -- (2,0);
%  \draw (10,0) -- (9,-2) -- (11,-2) -- (10,0);
\node [above] at (2,-1.2) {$C_w$};
\end{tikzpicture}
\end{center}
\caption{A tree maximizing $\frac{\dd_k(v)}{\dd_k(w)}$}
\label{fig:internal-internal-ratio}
\end{figure}
Note that for all $i\geq 0$,
\begin{equation*}
\dd_k(w_{i+1}) = \dd_k(w_i) +\binom{n -r - 2+i}{k-1} - \binom{r-i}{k-1}.
\end{equation*}
Thus, 
\begin{align*}
    \dd_k(v) &= \dd_k(w) + \binom{n-r -2}{k-1} + \binom{n-r -1}{k-1} + \cdots + \binom{n-3}{k-1} \\
    &-\binom{r}{k-1} -\binom{r-1}{k-1} - \cdots - \binom{k-1}{k-1}.
\end{align*}
Simplifying gives $\dd_k(v) = \dd_k(w) + \binom{n-2}{k} - \binom{n -r - 2}{k} -\binom{r + 1}{k}.$
So, we have 
\begin{equation*}
    \frac{\dd_k(v)}{\dd_k(w)} = 1 + \frac{\binom{n-2}{k} - \binom{n -r - 2}{k} -\binom{r + 1}{k}}{\dd_k(w)}.
\end{equation*}
To minimize $\dd_k(w),$ all vertices adjacent to $w$ except $w_1$ must be leaves. Now, casework on the number of leaves adjacent to $w$ chosen to be in a set of $k-1$ vertices that does not include $w$ gives the formula
\begin{align*}
    \dd_k(w) & = \binom{n-r-2}{k-1}(k-1) + \sum_{i = 0}^{k -2}\binom{n-r-2}{k-2-i}\sum_{j = 0}^{r - i}(r + k -1 -i - j) \binom{r - j}{i}.
\end{align*}

\begin{comment}
\begin{align*}
    \dd_k(w) & = \binom{n-r-2}{k-1}(k-1)\\
    &+  \binom{n-r-2}{k-2}\left((r+k-1)\binom{r}{0} + (r+k-2)\binom{r-1}{0} + \cdots + (k-1)\binom{0}{0} \right)\\
  %  &+  \binom{n-r-2}{k-3}\left((r+k-2)\binom{r}{1} + (r+k-3)\binom{r-1}{1} + %\cdots + (k-1)\binom{1}{1} \right)\\
    &+ \cdots \\
    &+ \binom{n-r-2}{0}\left((r +1)\binom{r}{k-2} + (r)\binom{r-1}{k-2} + \cdots + (k-1)\binom{k-2}{k-2}\right).
\end{align*}
\end{comment}

\begin{comment}
\begin{align*}
    \binom{n-r-2}{k-1}&\left((r+k)\binom{r+1}{0} - \binom{r+1}{1}\right)\\
    +  \binom{n-r-2}{k-2}&\left((r+k-1)\binom{r+1}{1} - \binom{r+1}{2}\right)\\ 
    +   \binom{n-r-2}{k-3}&\left((r+k-2)\binom
    {r+1}{2} - \binom{r+1}{3}\right)\\
    + \cdots \\
    +  \binom{n-r-2}{0}&\left((r+1)\binom{r+1}{k-1} - \binom{r+1}{k}\right).
\end{align*}
\end{comment}

Simplifying as in Theorem \ref{Theorem: Bounds for ratio of leafs}, we get 
\begin{equation*}
\dd_{k}(w) = (n-r-2)\binom{n-2}{k-2} + (r+1)\binom{n-1}{k-1}
+ \binom{n-r-2}{k}
- \binom{n-1}{k}.
\end{equation*}
So, our proof is complete.
\end{proof}

\subsection{Extremal ratios between unlike vertices}
We start with two observations which will be used to prove Theorem \ref{Theorem: Bounds for one leaf and one centroid}. 
\begin{lemma} \label{Theorem: Branches are paths to maximize Steiner distance}
Given a tree $T$ with leaf $w \in L(T)$ and vertex $v \in V(T)$ such that $vw \in E(T)$, let $C_v$ be an arbitrary branch at $v$ and let $S = T - (C_v-v)$ be fixed. For $\dd_k(v)$ to be maximized, $C_v$ must be a path.
\end{lemma}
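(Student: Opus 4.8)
The plan is to express $\dd_k(v)$ as a sum of contributions, one per edge of $C_v$, each of which turns out to be a non-increasing function of the number of vertices that edge cuts off from $v$; minimizing each term then forces $C_v$ to be ``stretched out'' into a path. Throughout, note that $S$ being fixed means both $n$ and $p := |V(C_v)|$ are fixed.

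First I would set up the edge bookkeeping. For an edge $e$ of $C_v$, let $t(e)$ be the number of vertices in the component of $C_v - e$ not containing $v$. Using that an edge $e$ lies in the Steiner tree of a set $A$ exactly when both components of $T-e$ meet $A$, and summing over all $(k-1)$-subsets $S'$ of $V(T)\setminus\{v\}$, I would obtain
\begin{equation*}
\dd_k(v) = C_S + (p-1)\binom{n-1}{k-1} - \sum_{e\in E(C_v)}\binom{n-1-t(e)}{k-1},
\end{equation*}
where $C_S$ gathers the (fixed) contributions of the edges of $S$. The mechanism is that $v$ lies on the larger side of every edge of $C_v$, so such an edge is used precisely by those $S'$ meeting its $t(e)$-vertex side, which happens for $\binom{n-1}{k-1}-\binom{n-1-t(e)}{k-1}$ choices of $S'$. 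Hence maximizing $\dd_k(v)$ is the same as minimizing $\Phi(C_v):=\sum_{e\in E(C_v)}\binom{n-1-t(e)}{k-1}$, and $t\mapsto\binom{n-1-t}{k-1}$ is non-increasing.

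Next I would run a branch-merging exchange. Assume $C_v$ is not a path, and let $u$ be a vertex of $C_v$ with $\deg_{C_v}(u)\geq 3$ at maximum distance from $v$; since $v$ is a leaf of $C_v$ we have $u\neq v$, and all subtrees hanging off $u$ on the far side from $v$ are paths, say $P_1,P_2,\dots$ with $|V(P_i)|\geq 1$. Form $C_v'$ by deleting the edge joining $u$ to $P_2$ and re-attaching $P_2$ to the leaf end of $P_1$, so that $P_1$ and $P_2$ merge into one path. A direct check shows $t(e)$ is unchanged for every edge of $C_v'$ except the edge from $u$ into $P_1$ and the interior edges of $P_1$, each of which has its $t$-value raised by $|V(P_2)|$. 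In particular the last edge of $P_1$ (incident to its leaf) has its contribution drop from $\binom{n-2}{k-1}$ to $\binom{n-2-|V(P_2)|}{k-1}$, which is strict when $k<n$ (the case $k=n$ being vacuous since $\dd_n(v)=n-1$ always). Therefore $\Phi(C_v')<\Phi(C_v)$, i.e. $\dd_k(v)$ strictly increases, while $\sum_{w:\,\deg_{C_v}(w)\geq 3}(\deg_{C_v}(w)-2)$ decreases by $1$. Iterating, a maximizer has no vertex of degree $\geq 3$ and so is a path.

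The one place that needs care is the exchange step: verifying exactly which $t(e)$ change (and that none decrease), and isolating a term that strictly decreases so that the conclusion is ``$C_v$ \emph{must} be a path'' rather than just ``a path is optimal.'' The rest is routine. As an alternative to the iterated exchange, one can finish directly from the displayed formula: for any tree on $p$ vertices rooted at $v$ the number of edges with $t(e)\geq j$ is at most $p-j$, with equality for all $j$ exactly when the tree is a path, and an Abel summation against the non-positive increments $\binom{n-1-j}{k-1}-\binom{n-j}{k-1}$ shows the path minimizes $\Phi$; this version also reveals exactly when the optimum is unique.
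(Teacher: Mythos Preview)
Your proof is correct and takes a genuinely different route from the paper's. The paper decomposes $\dd_k^T(v)$ according to how the $k-1$ companion vertices split between $S$ and $C_v$, reducing to the claim that a path simultaneously maximizes $\dd_i^{C_v}(v)$ for every $2\le i\le k$; it then argues this by bounding, for each threshold $j$, the number of $(i-1)$-sets whose Steiner tree with $v$ has at least $j$ edges. You instead decompose by \emph{edges}: each edge $e$ of $C_v$ contributes $\binom{n-1}{k-1}-\binom{n-1-t(e)}{k-1}$, so the problem becomes minimizing $\sum_{e}\binom{n-1-t(e)}{k-1}$ over rooted trees on $p$ vertices. Your exchange argument (merge two pendant paths at a deepest branching vertex, which only raises $t$-values along $P_1$) is clean and immediately yields strictness for $k<n$; the Abel-summation alternative, via the bound $|\{e:t(e)\ge j\}|\le p-j$, is also valid and makes the uniqueness transparent. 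One cosmetic point: when you say ``the edge from $u$ into $P_1$ and the interior edges of $P_1$,'' you really mean \emph{all} edges of $P_1$ (including the last one incident to its leaf, which you then correctly single out). Compared with the paper, your argument avoids the somewhat awkward product decomposition, works directly in $T$ rather than reducing to $C_v$ alone, and delivers the strict inequality needed for ``must be a path'' without extra words.
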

\begin{proof}
Note that $\dd^{T}_k(v) = \dd_{k}^{S}(v) + \dd_{k}^{C_v}(v) + \sum_{i=2}^{k-1}\dd_i^{S}(v)\dd_{k+1-i}^{C_v}(v)$. Thus, to maximize $\dd_k^{T}(v)$, we must maximize $\dd_{i}^{C_v}(v)$ for integers $2 \leq i \leq k$. Note that setting $C_v$ to be a path achieves this since for all $k-1 \leq j \leq n-1$, a path has $\binom{n-1}{k-1}-\binom{j-1}{k-1}$ sets of $k-1$ vertices not including $v$ such that the Steiner distance of the union of $v$ and that set is at least $j$. This is maximal.
\end{proof}

\begin{lemma}\label{Prep: Moving end vertices of paths}
Take a tree $T$ with $v \in V(M_k(T))$ and $w \in L(T)$ such that $vw \in E(T)$. Given two branches $P_1$ and $P_2$ of $v$ that do not include $w$ and are paths such that $|V(P_1)| \leq |V(P_2)|$, let $P_1$ be the path $v = v_0v_1\dots v_a$ and $P_2$ be the path $v = v_0r_1 \dots r_b$. Then, we call a leaf-transfer from $P_1$ to $P_2$ as creating another tree $T' = T-v_av_{a-1}+v_ar_b$. This gives $d_k^{T}(v) \leq d_k^{T'}(v)$. 
\end{lemma}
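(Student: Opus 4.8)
The plan is to isolate the only part of the tree that changes under a leaf-transfer and reduce the inequality to a one-variable statement about a single path. Write $C_v=P_1\cup P_2$ for the union of the two branches; since $P_1,P_2$ are paths sharing only the endpoint $v$, $C_v$ is a path on $a+b+1$ vertices with $v$ at distance $a$ from one end and $b$ from the other. Let $S$ be the union of all the remaining branches at $v$, so that $S$ and $C_v$ share only the vertex $v$ and $V(S)\cup V(C_v)=V(T)$; note $w\in V(S)$. Because $v$ is a cut vertex separating $V(S)\setminus\{v\}$ from $V(C_v)\setminus\{v\}$, for any $k$-set $U\ni v$ the Steiner tree of $U$ splits as $\dd_T(U)=\dd_S((U\cap S)\cup\{v\})+\dd_{C_v}((U\cap C_v)\cup\{v\})$. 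Summing over $U$ and grouping the $k-1$ companions of $v$ according to how many lie in $S$ versus $C_v$ gives the decomposition already exploited in Lemma~\ref{Theorem: Branches are paths to maximize Steiner distance}: writing $f_X(m)=\sum_{A\subseteq V(X)\setminus\{v\},\,|A|=m}\dd_X(\{v\}\cup A)$ and $g_X(m)=\binom{|V(X)|-1}{m}$ for a subtree $X\ni v$, one gets
\begin{equation*}
\dd_k^T(v)=\sum_{j=0}^{k-1}\bigl(f_S(j)\,g_{C_v}(k-1-j)+g_S(j)\,f_{C_v}(k-1-j)\bigr).
\end{equation*}

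Next I would observe that a leaf-transfer from $P_1$ to $P_2$ does not touch $S$, so $f_S$ and $g_S$ are unchanged, and it rearranges $C_v$ into another path $C_v'$ on the same number of vertices, so $g_{C_v}$ is unchanged as well. Hence
\begin{equation*}
\dd_k^{T'}(v)-\dd_k^T(v)=\sum_{j=0}^{k-1}g_S(j)\bigl(f_{C_v'}(k-1-j)-f_{C_v}(k-1-j)\bigr),
\end{equation*}
and since every $g_S(j)\ge 0$, it suffices to prove $f_{C_v'}(m)\ge f_{C_v}(m)$ for every $m\ge 0$. So the whole problem collapses to a single path: I must show that shifting the marked vertex $v$ one step further from the center (from distance $a$ to distance $a-1$ from the near end, with $a\le b$) does not decrease $f$.

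For this I would compute $f_Q(m)$ explicitly for a path $Q$ on $p+q+1$ vertices with $v$ at distance $p$ from one end and $q$ from the other. In a path the Steiner tree of $\{v\}\cup A$ is the subpath between the extreme chosen vertices, so $\dd_Q(\{v\}\cup A)$ equals the distance from $v$ to the farthest chosen vertex on the left plus the distance to the farthest on the right; splitting the two sides and applying the Hockey Stick Identity yields
\begin{equation*}
f_Q(m)=(p+q)\binom{p+q}{m}-2\binom{p+q}{m+1}+\binom{p}{m+1}+\binom{q}{m+1}.
\end{equation*}
With $N=a+b$ fixed, the transfer sends $(p,q)=(a,b)$ to $(a-1,b+1)$, so only the last two binomials change and they telescope: $f_{C_v'}(m)-f_{C_v}(m)=\bigl(\binom{b+1}{m+1}-\binom{b}{m+1}\bigr)-\bigl(\binom{a}{m+1}-\binom{a-1}{m+1}\bigr)=\binom{b}{m}-\binom{a-1}{m}\ge 0$, the inequality holding because $|V(P_1)|\le|V(P_2)|$ forces $a\le b$, hence $b\ge a-1\ge 0$. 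Substituting back into the expression for $\dd_k^{T'}(v)-\dd_k^T(v)$ finishes the proof.

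There is no genuine obstacle here; the argument is mostly careful bookkeeping. The one point requiring attention is the degenerate ranges — for instance $a=1$, where the transferred path $P_1$ becomes the single vertex $v$ and $v$ sits at an end of $C_v'$, or values of $m$ exceeding the number of available companions — and one must check that the closed form for $f_Q$ and the telescoping identity remain valid there (they do, as the relevant binomial coefficients vanish on both sides). Finally, it is worth remarking that the hypothesis $v\in V(M_k(T))$ is not actually used; it is stated only because this is the setting in which the lemma will be invoked.
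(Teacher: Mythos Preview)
Your proof is correct. Both you and the paper reduce the problem to the subgraph consisting of the two path-branches $P_1\cup P_2$; the paper calls this subgraph $S$ (so the roles of your $S$ and $C_v$ are swapped relative to the paper's notation). From there the arguments diverge. The paper observes that $S'$ is isomorphic to $S$ with $v$ shifted by one step along the path --- concretely, $\dd_k^{S}(v)=\dd_k^{S'}(r_1)$ --- and then invokes the adjacency formula of Theorem~\ref{Theorem: Steiner distance comparison} to get $\dd_k^{S'}(v)-\dd_k^{S'}(r_1)=\binom{b}{k-1}-\binom{a-1}{k-1}\ge 0$ in one line. You instead derive the closed form $f_Q(m)=(p+q)\binom{p+q}{m}-2\binom{p+q}{m+1}+\binom{p}{m+1}+\binom{q}{m+1}$ and telescope directly, arriving at the same quantity $\binom{b}{m}-\binom{a-1}{m}$. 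The paper's route is shorter and avoids the explicit Hockey-Stick computation; yours is more self-contained and, in particular, makes the reduction step (which the paper states as an unjustified ``if and only if'') fully rigorous by exhibiting the decomposition and showing the inequality holds termwise for every $m$. Your closing remark that the hypothesis $v\in V(M_k(T))$ is never invoked is also correct.
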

\begin{proof}
Let $C_v$ be the connected component containing $v$ in $T-v_0v_1-v_0r_1$. We now define two more graphs, $S = T - (C_v-v)$ and $S' = T' - (C_v-v)$, as shown in Figure \ref{fig:SandS}. Note that $d_k^{T}(v) \leq d_k^{T'}(v)$ if and only if $d_k^{S}(v) \leq d_k^{S'}(v)$. The latter inequality is true by noting $d_k^{S}(v) = d_k^{S'}(r_1)$ and applying Theorem \ref{Theorem: Steiner distance comparison}, so the first is as well.
\end{proof}

\begin{figure}[htb] 
\begin{center}
\begin{tikzpicture}[scale = 0.7]
\draw (0, 1) -- (2,1);
 \draw [dashed](2,1) -- (4,1) ;
\draw (4, 1) -- (6,1) --(8,1);
 \draw [dashed](8,1) -- (10,1) ;

\fill (0, 1) circle (2pt) node[] {};
\fill (2, 1) circle (2pt) node[] {};
\fill (4, 1) circle (2pt) node[] {};
\fill (6, 1) circle (2pt) node[] {};
\fill (8, 1) circle (2pt) node[] {};
\fill (10, 1) circle (2pt) node[] {};

 \node [below] at (0, 1) {$v_a$};
  \node [below] at (2, 1) {$v_{a-1}$};
   \node [below] at (4, 1) {$v_1$};
  \node [below] at (6, 1) {$v_0$};
   \node [below] at (8, 1) {$r_1$};
  \node [below] at (10, 1) {$r_b$};
  \node [left] at (-1, 1) { $S$};
  
  % \node[isosceles triangle,isosceles triangle apex angle=60,
  %  draw,
  % rotate=90,
  %  minimum size = 1.1cm] (T1)at (6, -0){};
  
  \draw[dashed] (0, -1.5) -- (2,-1.5);
 \draw(2,-1.5) -- (4,-1.5)-- (6,-1.5) ;
\draw[dashed] (6,-1.5) --(8,-1.5);
 \draw (8,-1.5) -- (10,-1.5);

\fill (0, -1.5) circle (2pt) node[] {};
\fill (2, -1.5) circle (2pt) node[] {};
\fill (4, -1.5) circle (2pt) node[] {};
\fill (6, -1.5) circle (2pt) node[] {};
\fill (8, -1.5) circle (2pt) node[] {};
\fill (10, -1.5) circle (2pt) node[] {};

 \node [below] at (0, -1.5) {$v_{a-1}$};
  \node [below] at (2, -1.5) {$v_1$};
   \node [below] at (4, -1.5) {$v_0$};
  \node [below] at (6, -1.5) {$r_1$};
   \node [below] at (8, -1.5) {$r_b$};
  \node [below] at (10, -1.5) {$v_a$};
  \node [left] at (-1, -1.5) {$ {S'}$};
  
 %  \node[isosceles triangle,isosceles triangle apex angle=60,
 %   draw,
 %   rotate=90,
 %   minimum size = 1.1cm] (T1)at (4, -2.5){};
    
\end{tikzpicture}
\end{center}
 \caption{$S \text{ } and \text{ }S'$}
 \label{fig:SandS}
\end{figure}

We also state the following lemmas which will help with final calculations in Theorem \ref{Theorem: Bounds for one leaf and one centroid}.
\begin{lemma}\label{Theorem: Steiner distance of a vertex on an a+b+1 path}
Given a path $P$ of length $a+b+1$, let $v$ be a vertex on $P$ of distance $a$ from one leaf on $P$ and distance $b$ from the other. Then, $\dd_k(v) = (k-1)\binom{b+a +1}{k}- (k-1) \binom{b+1}{k} -(k-1)\binom{a+1}{k} -\binom{b+a}{k} + b\binom{b}{k-1} + a\binom{a}{k-1}$.
\end{lemma}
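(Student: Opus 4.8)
The plan is to compute $\dd_k(v)$ directly by classifying the $k$-subsets containing $v$ according to how many of their vertices fall on each side of $v$ along the path $P$. Label the two sub-paths emanating from $v$: one with vertices $u_1, \dots, u_a$ (at distances $1, \dots, a$) and one with $x_1, \dots, x_b$ (at distances $1, \dots, b$). For a set $S$ with $v \in S$, $|S| = k$, the Steiner tree of $S$ is simply the smallest sub-path of $P$ containing all of $S$; if the farthest chosen vertex on the first side is $u_i$ (with $i = 0$ if none is chosen) and the farthest on the second side is $x_j$, then $\dd(S) = i + j$. So I would write
\begin{equation*}
\dd_k(v) = \sum_{S} (i_S + j_S),
\end{equation*}
and split this into the contribution from the first side and the contribution from the second side, which are symmetric under $a \leftrightarrow b$. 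It therefore suffices to evaluate $\sum_S i_S$ and then add the mirror image.

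**The main computation.** To compute $\sum_S i_S$, I would use the standard "sum equals sum of tail counts" trick: $i_S = \sum_{\ell \ge 1} [\,i_S \ge \ell\,]$, so
\begin{equation*}
\sum_S i_S = \sum_{\ell=1}^{a} \#\{S : v \in S, |S|=k, \text{ some chosen vertex lies at distance} \ge \ell \text{ on the first side}\}.
\end{equation*}
The number of $k$-sets containing $v$ is $\binom{a+b}{k-1}$; the number of those whose first-side vertices all lie among $u_1,\dots,u_{\ell-1}$ (i.e.\ none at distance $\ge \ell$) is $\binom{(\ell-1)+b}{k-1}$. Hence
\begin{equation*}
\sum_S i_S = \sum_{\ell=1}^{a}\left(\binom{a+b}{k-1} - \binom{\ell-1+b}{k-1}\right) = a\binom{a+b}{k-1} - \sum_{m=b}^{a+b-1}\binom{m}{k-1},
\end{equation*}
and the inner sum collapses by the Hockey Stick Identity to $\binom{a+b}{k} - \binom{b}{k}$. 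Adding the symmetric term in $b$ gives a first closed form; the remaining work is to reconcile it with the stated expression. The cross term $k\binom{a+b+1}{k}$-style factors arise because one needs to re-expand $a\binom{a+b}{k-1}$ and $b\binom{a+b}{k-1}$ together with $-\binom{a+b}{k} - \binom{a+b}{k}$ in terms of binomials indexed by $a+b+1$, $a+1$, $b+1$, using Pascal's rule $\binom{m}{k}+\binom{m}{k-1}=\binom{m+1}{k}$ repeatedly and the identity $a\binom{a+b}{k-1} = (k-1)\binom{a+b+1}{k} - (k-1)\binom{b+1}{k} - (\text{correction}) + a\binom{a}{k-1} + \dots$; concretely I would prove the auxiliary identity $\sum_{m=0}^{a-1}\binom{m+b+1}{k-1} \cdot(\text{appropriate weight})$ reorganizes into the claimed form, or simply verify the two expressions agree by showing both satisfy the same recursion in $a$ with the same base case $a=0$.

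**The obstacle.** The conceptual content — counting Steiner trees on a path via tail sums and Hockey Stick — is routine; the genuine friction is purely algebraic bookkeeping: matching my naturally-arising form $a\binom{a+b}{k-1} + b\binom{a+b}{k-1} - \binom{a+b}{k} - \binom{b}{k} - \binom{a}{k} + (\text{symmetric})$ against the paper's form involving $(k-1)\binom{a+b+1}{k}$, $(k-1)\binom{b+1}{k}$, $(k-1)\binom{a+1}{k}$, $\binom{a+b}{k}$, $b\binom{b}{k-1}$, $a\binom{a}{k-1}$. I expect this to come down to the single identity $(k-1)\binom{N+1}{k} = N\binom{N}{k-1} - \binom{N}{k} + \binom{N}{k-1}$ (equivalently $k\binom{N+1}{k} = (N+1)\binom{N}{k-1}$, which is absorption), applied with $N = a+b$, $N = a$, $N = b$ and then Pascal's rule to combine $\binom{a+1}{k}$ and $\binom{b+1}{k}$ terms. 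So the plan is: (1) reduce to a path and set up the distance as farthest-on-each-side; (2) tail-sum and Hockey Stick to get a raw closed form; (3) apply absorption and Pascal identities to massage it into the stated expression; the last step is the only place care is needed, and I would double-check it against $k=2$ (where $\dd_k(v)$ should reduce to the ordinary distance-sum $\binom{a+1}{2}+\binom{b+1}{2}$) as a sanity check.
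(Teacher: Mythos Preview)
Your approach is correct and is in fact cleaner than the paper's. The paper splits the $(k-1)$-subsets accompanying $v$ into three cases---all on the $a$-side, all on the $b$-side, and mixed---and handles the mixed case by a double sum over the farthest vertex on each side, collapsing it with two applications of Hockey Stick. You instead exploit linearity, writing $\dd(S)=i_S+j_S$ and computing $\sum_S i_S$ (and its mirror) via the tail-count trick and a single Hockey Stick. This avoids the paper's messier mixed-case double sum and gives the compact raw form $(a+b)\binom{a+b}{k-1}-2\binom{a+b}{k}+\binom{a}{k}+\binom{b}{k}$ directly.

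The only step you leave somewhat open is the algebraic reconciliation with the stated expression, but the identity you flag---absorption $m\binom{m}{k-1}=(k-1)\binom{m}{k-1}+k\binom{m}{k}$---applied with $m=a+b$, then $m=a$, $m=b$, together with Pascal $\binom{m}{k-1}+\binom{m}{k}=\binom{m+1}{k}$, does convert your raw form into the paper's in two lines: first $(a+b)\binom{a+b}{k-1}-2\binom{a+b}{k}=(k-1)\binom{a+b+1}{k}-\binom{a+b}{k}$, and then $\binom{a}{k}=a\binom{a}{k-1}-(k-1)\binom{a+1}{k}$ (and symmetrically for $b$). So the ``obstacle'' you anticipate is a routine verification, not a gap.
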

\begin{proof}
Let $P_1$ denote the subgraph of $P$ containing the $a$ vertices on one side of $v$ and let $P_2$ denote the subgraph of $P$ containing the $b$ vertices on the other side of $v$. By the definition of Steiner $k$-distance, to find $\dd_k(v),$ we may use casework on the location of the $k-1$ vertices chosen to accompany $v$. The sum of the Steiner distances of all sets of $k$ vertices where $k-1$ vertices are in $P_2$ and the last is $v$ is
\begin{equation*}
    \sum_{i=1}^{b}\binom{b-i}{k-2}(b+1-i)
    = \sum_{i=1}^{b}\left(\binom{b}{k-1} - \binom{i-1}{k-1}\right)
    = b\binom{b}{k-1}-\binom{b}{k}.
\end{equation*}
Similarly, the sum of the Steiner distance of all sets of $k$ vertices where $k-1$ vertices are in $P_1$ and the last is $v$ is $a\binom{a}{k-1}-\binom{a}{k}$. The sum of the Steiner distance of all sets of $k$ vertices including $v$ with vertices in both $P_1$ and $P_2$ is, by casewo rk on the distance of the furthest vertex from $v$ chosen in $P_1$,
\begin{align*}
  &\sum_{i=1}^{a}\sum_{j=1}^{b}\binom{i-1+b-j}{k-3}(i+1+b-j) \\
    & \quad = \sum_{i = 1}^a \left( ( b+ i)\binom{b+ i -2}{k -3} + ( b+ i - 1)\binom{b+ i -3}{k -3} + \dots + (i + 1)\binom{i -1}{k - 3} \right) \\
    \end{align*}
    Now, applying the Hockey Stick Identity gives
    \begin{align*}
  %  & \quad = \sum_{i = 1}^a \left( (i + 1) \left( \binom{b+ i -1}{k -2} - \binom{i-1}{k -2} \right)  + ( b - 1)\binom{b+ i -1}{k -2} - \binom{i}{k-2} - \cdots - \binom{b+ i - 2}{k-2} \right) \\
    & \quad = \sum_{i = 1}^a \left( (b+i) \binom{b+ i - 1}{k -2} -(i+1)\binom{i-1}{k-2} -\binom{b+i-1}{k-1} + \binom{i}{k-1} \right) \\
    & \quad = (k-1) \sum_{i = 1}^a \binom{b+i}{k-1} + \sum_{i = 1}^a \binom{i}{k-1} - \sum_{i = 1}^a \binom{b+i-1}{k-1} - \sum_{i = 1}^a (i+1)\binom{i-1}{k-2} \\
   % & \quad = (k-1)\left( \binom{b+a +1}{k}- \binom{b+1}{k} \right)+ \binom{a+1}{k} - \left( \binom{b+a}{k}- \binom{b}{k} \right) -\sum_{i = 1}^a (i+1)\binom{i-1}{k-2} \\
   % & \quad =(k-1)\left( \binom{b+a +1}{k}- \binom{b+1}{k} \right) + \binom{a+1}{k} - \left( \binom{b+a}{k}- \binom{b}{k} \right) -(a+1)\binom{a}{k-1} + \binom{a}{k}\\
    & \quad =(k-1)\binom{b+a +1}{k}- (k-1) \binom{b+1}{k} -(k-1)\binom{a+1}{k} + \binom{a}{k} + \binom{b}{k} -\binom{b+a}{k}.
\end{align*}
Summing over all cases gives our desired result.
\end{proof}
\begin{lemma}\label{Theorem: a+b=1 path with vertex Steiner distance}
Let $T$ be a graph consisting of a path $P$ of order $a+b+1$ and a vertex $w \notin V(P)$ such that $w$ is adjacent to a vertex $v \in V(P)$ such that $a$ vertices in $P$ are on one side of $v$ and $b$ vertices are on the other. Then, 
\begin{align*}
    \dd_k(v) & = (k-2)\binom{b+a+2}{k}-(k-2-b)\binom{b+1}{k-1}-(k-2-a)\binom{a+1}{k-1}-(k-1)\binom{b+1}{k}\\
    &\quad -(k-1)\binom{a+1}{k}+ \binom{a+b}{k-2}.
\end{align*}
\end{lemma}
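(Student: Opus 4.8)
The plan is to split the defining sum for $\dd_k(v)$ according to whether the pendant vertex $w$ lies among the $k-1$ companions of $v$, and in each piece reduce to Lemma \ref{Theorem: Steiner distance of a vertex on an a+b+1 path}, which already evaluates the Steiner $k$-distance of a vertex having exactly two path-branches of prescribed sizes. Throughout, $P$ is the path of order $a+b+1$, with $a$ of its vertices on one side of $v$ and $b$ on the other.

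First I would handle the sets $S$ with $|S| = k-1$, $v \notin S$, and $w \notin S$. For such $S$ the vertex $w$ plays no role in the Steiner tree of $\{v\} \cup S$, and $S \subseteq V(P) \setminus \{v\}$, so the total contribution of these sets is exactly $\dd_k^{P}(v)$. By Lemma \ref{Theorem: Steiner distance of a vertex on an a+b+1 path} this equals
\[
(k-1)\binom{a+b+1}{k}-(k-1)\binom{b+1}{k}-(k-1)\binom{a+1}{k}-\binom{a+b}{k}+b\binom{b}{k-1}+a\binom{a}{k-1}.
\]

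Next I would handle the sets $S$ with $w \in S$, writing $S = \{w\} \cup S'$ with $S' \subseteq V(P)\setminus\{v\}$ of size $k-2$. Since $w$ is a leaf adjacent only to $v$, the Steiner tree of $\{v,w\} \cup S'$ in $T$ is the Steiner tree of $\{v\} \cup S'$ in $P$ together with the single edge $vw$, so $\dd_T(\{v,w\}\cup S') = 1 + \dd_P(\{v\}\cup S')$. Summing over the $\binom{a+b}{k-2}$ choices of $S'$ yields $\binom{a+b}{k-2} + \dd_{k-1}^{P}(v)$, and $\dd_{k-1}^{P}(v)$ is obtained by applying Lemma \ref{Theorem: Steiner distance of a vertex on an a+b+1 path} with $k$ replaced by $k-1$.

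Finally I would add the two contributions together with the extra $\binom{a+b}{k-2}$ and simplify. The coefficients of $\binom{b+1}{k}$ and $\binom{a+1}{k}$ already come out as claimed; the leg-terms combine through $b\binom{b}{k-1}+b\binom{b}{k-2}=b\binom{b+1}{k-1}$ (and symmetrically in $a$), which turns the $-(k-2)\binom{b+1}{k-1}$ from the $k-1$ application into $-(k-2-b)\binom{b+1}{k-1}$; and the remaining terms in $\binom{a+b+1}{k}$, $\binom{a+b+1}{k-1}$, $\binom{a+b}{k}$, $\binom{a+b}{k-1}$ collapse by two applications of Pascal's rule to $(k-2)\binom{a+b+2}{k}$. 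I expect this last algebraic bookkeeping to be the only laborious step; conceptually the points to be careful about are the pendant-edge decomposition in the $w \in S$ case and the observation that Lemma \ref{Theorem: Steiner distance of a vertex on an a+b+1 path} remains valid with parameter $k-1$ irrespective of whether $k-1$ exceeds $a$, $b$, or $a+b$, since the binomial coefficients vanish appropriately in those ranges.
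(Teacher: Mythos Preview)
Your proposal is correct and follows essentially the same approach as the paper: split according to whether $w$ lies in the companion set, reduce each piece to Lemma~\ref{Theorem: Steiner distance of a vertex on an a+b+1 path} (applied at parameters $k$ and $k-1$ respectively), pick up the extra $\binom{a+b}{k-2}$ from the pendant edge, and simplify. Your write-up actually spells out the algebraic collapse via Pascal's rule more explicitly than the paper does.
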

\begin{proof}
To find $\dd_k(v)$, we note that in any set $S$ of $k$ vertices with $v \in S$, either $w \in S$ or $w \notin S$. Finding the sum of the Steiner distances of the sets where $w \notin S$ is equivalent to $\dd_k^{T'}(v)$ where $T'$ is the path $P$. By Lemma \ref{Theorem: Steiner distance of a vertex on an a+b+1 path}, $\dd_k^{T'}(v) = (k-1)\binom{b+a +1}{k}- (k-1) \binom{b+1}{k} -(k-1)\binom{a+1}{k} -\binom{b+a}{k} + b\binom{b}{k-1} + a\binom{a}{k-1}$. Also, the sum of the Steiner distances of the sets where $w \in S$ is equivalent to $\dd_{k-1}^{T'}(v) + \binom{a+b}{k-2}$ since each such set $S$ consists of $w$, which is of distance $1$ to $v$, and $k-2$ vertices on $P$ besides $v$. By Lemma \ref{Theorem: Steiner distance of a vertex on an a+b+1 path}, this is $(k-2)\binom{b+a+1}{k-1}- (k-2) \binom{b+1}{k-1} -(k-2)\binom{a+1}{k-1} -\binom{b+a}{k-1} + b\binom{b}{k-2} + a\binom{a}{k-2} + \binom{a+b}{k-2}$. Summing these together gives
\begin{align*}
    \dd_k(v) & = (k-2)\left( \binom{b+a+1}{k-1} - \binom{b+1}{k-1}  - \binom{a+1}{k-1} \right) \\
    & \quad\quad\quad + (k-1) \left(\binom{b+a +1}{k} -  \binom{b+1}{k}  - \binom{a+1}{k} \right)\\
    &\quad\quad\quad  -\binom{b+a+1}{k} + b\binom{b+1}{k-1} + a\binom{a+1}{k-1} + \binom{a+b}{k-2},
\end{align*}
which simplifies into our desired result.
\end{proof}

For convenience, we also define the following function.
\begin{align*}
F(a,b) &= (k-2)\binom{b+a+2}{k}-(k-2-b)\binom{b+1}{k-1}-(k-2-a)\binom{a+1}{k-1}-(k-1)\binom{b+1}{k}\\
&\quad -(k-1)\binom{a+1}{k}+ \binom{a+b}{k-2}.    
\end{align*}

\begin{theorem} \label{Theorem: Bounds for one leaf and one centroid}
A tree $T$ of order $n$ with $w \in L(T)$ and $v \in V(M_k(T))$ satisfies the following.
\begin{itemize}
    \item 
    When $k \geq n-1$ the graph minimizing $\frac{\dd_k(w)}{\dd_k(v)}$ is a path of length $n-1$ with $w$ as a leaf and $v$ as a vertex adjacent to $w$. Also, $\frac{\dd_k(w)}{\dd_k(v)} \geq 1+\frac{\binom{n-2}{k-1}}{n-1}$ when $k=n$ and $\frac{\dd_k(w)}{\dd_k(v)} \geq 1 + \frac{\binom{n-2}{k-1}}{2(n-2) + (n-3)(n-1)}$ when $k=n-1$. 
    \item
    When $n$ is even and $k \leq \frac{n}{2} + 1$, the graph minimizing $\frac{\dd_k(w)}{\dd_k(v)}$ consists of a path of length $n-2$ where $v$ is a vertex on the path of distance $\frac{n}{2}$ from a leaf of the path and a vertex $w$ not on the path that is adjacent to $v$. Also, 
\begin{equation*}
    \frac{\dd_k(w)}{\dd_k(v)} \geq 1+ \frac{\binom{n-2}{k-1}}{F(\frac{n}{2},\frac{n}{2}-2)}.
\end{equation*}
\item
 
When $n$ is odd and $k \leq \frac{n}{2} + 1$, the graph minimizing $\frac{\dd_k(w)}{\dd_k(v)}$ consists of a path of length $n-2$ where $v$ is a vertex on the path of distance $\frac{n-1}{2}$ from a leaf on the path and a vertex $w$ not on the path that is adjacent to $v$. Also,
 \begin{equation*}
     \frac{\dd_k(w)}{\dd_k(v)} \geq 1+ \frac{ \binom{n-2}{k-1}}{F(\frac{n-1}{2},\frac{n-3}{2})}.
 \end{equation*}

 \item
  When $k > \frac{n}{2} + 1$, the graph minimizing $\frac{\dd_k(w)}{\dd_k(v)}$ consists of a path of length $n-2$ where $v$ is a vertex on the path of distance $k-1$ from a leaf on the path and a vertex $w$ not on the path that is adjacent to $v$. Also,
 \begin{equation*}
 \frac{\dd_k(w)}{\dd_k(v)} \geq 1+\frac{ \binom{n-2}{k-1}}{F(k-1, n-k-1)}.
 \end{equation*}
\end{itemize}
 
\end{theorem}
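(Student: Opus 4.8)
The plan is to reduce everything to a single extremal problem---maximize $\dd_k(v)$ over a restricted class of trees---and then solve it with the structural lemmas already available. \emph{Step 1 (the numerator).} If $vw\in E(T)$ with $w$ a leaf, then in $T-vw$ the component containing $v$ has $n-1$ vertices and that containing $w$ has one, so $n_{vw}(v)=n-1$ and $n_{vw}(w)=1$; Theorem~\ref{Theorem: Steiner distance comparison} then gives $\dd_k(w)=\dd_k(v)-\binom{0}{k-1}+\binom{n-2}{k-1}=\dd_k(v)+\binom{n-2}{k-1}$. Hence on any tree with $vw\in E(T)$ we have $\frac{\dd_k(w)}{\dd_k(v)}=1+\frac{\binom{n-2}{k-1}}{\dd_k(v)}$, so minimizing the ratio over such trees is the same as maximizing $\dd_k(v)$ subject to $v\in V(M_k(T))$, $vw\in E(T)$, and $|V(T)|=n$.

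\emph{Step 2 (reducing to $vw\in E(T)$).} Telescoping Theorem~\ref{Theorem: Steiner distance comparison} along the path $v=u_0u_1\cdots u_m=w$ gives $\dd_k(w)-\dd_k(v)=\sum_{i=0}^{m-1}\big(\binom{n-N_i-1}{k-1}-\binom{N_i-1}{k-1}\big)$, where $N_i$ is the number of vertices on the $w$-side of $u_iu_{i+1}$. The $N_i$ strictly decrease with $N_{m-1}=1$, and since $v$ is a centroid each summand is nonnegative, so $\dd_k(w)-\dd_k(v)\geq\binom{n-2}{k-1}$ for every tree, with equality when $m=1$. The remaining (and most delicate) point is that an $m\geq 2$ configuration cannot beat the optimum of Step~1: the numerator's excess over $\binom{n-2}{k-1}$ grows, but $\dd_k(v)$ can grow too, so one must balance the two using the monotonicity of $N\mapsto\binom{n-N-1}{k-1}-\binom{N-1}{k-1}$ together with an explicit operation pushing the $w$-branch onto $v$. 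I expect this to be the main obstacle.

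\emph{Step 3 (solving the maximization).} Assume now $vw\in E(T)$. Applying Lemma~\ref{Theorem: Branches are paths to maximize Steiner distance} to every branch at $v$ shows that in a maximizer each branch at $v$ is a path; thus $T$ is a spider centered at $v$, with the edge $vw$ as one leg of length $1$ and the other legs being paths on a total of $n-2$ vertices. By the leaf-transfer Lemma~\ref{Prep: Moving end vertices of paths}, merging two legs never decreases $\dd_k(v)$, so the optimal spider is as unbalanced as allowed; the only constraint is that $v$ must remain a Steiner $k$-centroid, which by Lemma~\ref{Theorem: Steiner centroid prerequisites} forces every leg (viewed as the component $C$ obtained by deleting its edge at $v$) to satisfy $|V(C)|\leq\lfloor n/2\rfloor$ unless $n-|V(C)|<k$. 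Together with $|V(C)|\le n-2$ this amounts to $|V(C)|\leq L:=\min\{\,n-2,\ \max\{\lfloor n/2\rfloor,\,k-1\}\,\}$, and a leg of size $L$ is always admissible; a suitable sequence of leaf-transfers (never pushing a leg above $L$) reaches, without destroying the centroid property, the tree with legs $L$ and $n-2-L$ plus the pendant $w$, which is the maximizer.

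\emph{Step 4 (the four regimes).} Finally one reads off $L$ and substitutes into Lemmas~\ref{Theorem: Steiner distance of a vertex on an a+b+1 path} and~\ref{Theorem: a+b=1 path with vertex Steiner distance}. If $k\geq n-1$ then $L=n-2$, there is a single nontrivial leg, $T$ is the path on $n$ vertices with $v$ adjacent to $w$, and a direct count gives $\dd_n(v)=n-1$ and $\dd_{n-1}(v)=2(n-2)+(n-3)(n-1)$, yielding the two stated bounds. If $k\leq n/2+1$ then $L=\lfloor n/2\rfloor$, so the legs are $\big(\frac n2,\frac n2-2\big)$ for $n$ even and $\big(\frac{n-1}2,\frac{n-3}2\big)$ for $n$ odd, and $\dd_k(v)=F\big(\frac n2,\frac n2-2\big)$ resp.\ $F\big(\frac{n-1}2,\frac{n-3}2\big)$ by Lemma~\ref{Theorem: a+b=1 path with vertex Steiner distance}. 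If $n/2+1<k<n-1$ then $L=k-1$, the legs are $(k-1,\,n-k-1)$, and $\dd_k(v)=F(k-1,\,n-k-1)$. In every case $\frac{\dd_k(w)}{\dd_k(v)}=1+\frac{\binom{n-2}{k-1}}{\dd_k(v)}$ is exactly the asserted expression, and by Step~2 no tree beats it.
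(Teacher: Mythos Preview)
Your plan mirrors the paper's proof almost exactly: Step~1 (the $\binom{n-2}{k-1}$ formula via Theorem~\ref{Theorem: Steiner distance comparison}), Step~3 (branches are paths by Lemma~\ref{Theorem: Branches are paths to maximize Steiner distance}, then leaf-transfers to collapse to two legs bounded by the centroid constraint), and Step~4 (reading off the four regimes from Lemma~\ref{Theorem: a+b=1 path with vertex Steiner distance}) are the paper's arguments in the same order, and your formula $L=\min\{n-2,\max\{\lfloor n/2\rfloor,k-1\}\}$ is a clean way to package the case split.

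The gap is exactly where you flag it. Your telescoping bound $\dd_k(w)-\dd_k(v)\ge\binom{n-2}{k-1}$ is correct but, as you yourself note, does not rule out an $m\ge2$ minimizer, and you have not supplied the ``explicit operation pushing the $w$-branch onto $v$'' that you allude to. The paper fills this as follows. Assume $vw\notin E(T)$ in a minimizer, write the path as $w=w_0w_1\cdots w_r=v$, and (arguing as in Theorem~\ref{Theorem: Bounds for ratio of leafs}) arrange that $w_2,\dots,w_{r-1}$ have degree~$2$. Let $B$ be the order of the $w_1$-component of $T-w_0w_1-w_1w_2$ and set $T'=T-w_0w_1+w_2w_0$. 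Then $v$ remains a Steiner $k$-centroid of $T'$, and one computes directly
\[
\dd_k^{T'}(w)=\dd_k^{T}(w)+\tbinom{B}{k-1}-\tbinom{n-1-B}{k-1},\qquad
\dd_k^{T'}(v)=\dd_k^{T}(v)-\tbinom{n-2-B}{k-2}.
\]
Since $w_1\notin V(M_k(T))$ one has $B\le\frac{n-3}{2}$. Combining this with the universal upper bound $\frac{\dd_k(w)}{\dd_k(v)}\le 1+\frac{\binom{n-2}{k-1}}{(k-1)\binom{n-1}{k-1}}$ (which follows from $\dd_k(v)\ge(k-1)\binom{n-1}{k-1}$), an elementary inequality comparison shows $\frac{\dd_k^{T'}(w)}{\dd_k^{T'}(v)}<\frac{\dd_k^{T}(w)}{\dd_k^{T}(v)}$, contradicting minimality; the borderline cases $k\ge n-B-1$ are dispatched separately. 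This local move and the accompanying inequality are the missing ingredient---once you supply them, the rest of your outline goes through.
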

\begin{proof}
First note that in a tree $S$ of order $n$ with $w \in L(S)$ and $v \in V(M_k(S))$ such that $vw \in E(S)$, we have that 
\begin{equation*}
    \frac{\dd^S_k(w)}{\dd^S_k(v)} = 1 + \frac{ \binom{n-2}{k-1}}{\dd^S_k(v)}.
\end{equation*}

Additionally, since $\dd_k(v) \geq \binom{n-1}{k-1}(k-1)$, we know that
\begin{equation*}
    \frac{\dd^S_k(w)}{\dd^S_k(v)} \leq 1 + \frac{\binom{n-2}{k-1}}{\binom{n-1}{k-1}(k-1)}.
\end{equation*}

Now, consider an arbitrary tree $T$ that minimizes $\frac{\dd_k(w)}{\dd_k(v)}$ over all trees of order $n$. Since all elements in $M_k(T)$ have equal Steiner $k$-distance, we let $v$ be the element in $M_k(T)$ with least distance to $w$.

Assume for contradiction that $vw \notin E(T)$ and note that there is then a unique path $w = w_0w_1 \dots w_r = v$. From similar logic to Theorem \ref{Theorem: Bounds for ratio of leafs}, $deg(w_i) = 2$ for $2 \leq i \leq r-1$. Let the connected component containing $w_1$ in $T-w_0w_1-w_1w_2$ be denoted by $C_{w_1}$. Set $B=|V(C_{w_1})|$. Then, note that by Theorem \ref{Theorem: Steiner centroid prerequisites}, $v$ is still a Steiner $k-$centroid in the tree $T' = T-w_0w_1+w_2w_0.$ Additionally,
\begin{align*}
\dd^{T'}_k(w) = \dd^{T}_k(w) + \binom{B}{k-1} - \binom{n-1-B}{k-1} \textup{ and }
\dd^{T'}_k(v) = \dd^{T}_k(v) - \binom{n-2-B}{k-2}. 
\end{align*}
This gives $\frac{\dd^{T'}_k(w)}{\dd_k^{T'}(v)} = \frac{\dd^{T}_k(w) + \binom{B}{k-1} - \binom{n-1-B}{k-1}}{\dd^{T}_k(v) - \binom{n-2-B}{k-2}}$. As we assumed $vw \notin E(T)$, we know $w_1 \notin V(M_k(T))$. So, $B \leq \frac{n-3}{2}$. When $n-b-1 > k$, the following inequalities each imply the next going from top to bottom.
\begin{align*}
\quad 1 & \geq \frac{\prod_{i=0}^{k-2}(B-i)}{(n-B-k-1) \prod_{i=0}^{k-3}(n-B-2-i)}\\
  n-B-k-1 & \geq \frac{\prod_{i=0}^{k-2}(B-i)}{\prod_{i=0}^{k-3}(n-B-2-i)}\\
% n-B-1 & \geq k-1 + 1 + \frac{\prod_{i=0}^{k-2}(B-i)}{\prod_{i=0}^{k-3}(n-B-2-i)}.\\
\quad\frac{n-B-1}{k-1} & > 1 + \frac{n-k}{(n-1)(k-1)} + \frac{\prod_{i=0}^{k-2}(B-i)}{(k-1)\prod_{i=0}^{k-3}(n-B-2-i)}.
\end{align*}
Converting into binomials gives,
\begin{align*}
\quad\frac{n-B-1}{k-1} & > 1 + \frac{\binom{n-2}{k-1}}{\binom{n-1}{k-1}(k-1)} + \frac{ \binom{B}{k-1}}{\binom{n-B-2}{k-2}}\\
   \frac{\binom{n-B-1}{k-1} - \binom{B}{k-1}}{ \binom{n-B-2}{k-2}} & > 1 + \frac{ \binom{n-2}{k-1}}{ \binom{n-1}{k-1}(k-1)}. 
\end{align*}

Since $\frac{\dd^T_k(w)}{\dd^T_k(v)} \leq 1 + \frac{\binom{n-2}{k-1}}{\binom{n-1}{k-1}(k-1)}$, 
\begin{equation*}
\frac{\dd^{T'}_k(w)}{\dd^{T'}_k(v)} = \frac{\dd^{T}_k(w) + \binom{B}{k-1} - \binom{n-1-B}{k-1}}{\dd^{T}_k(v) - \binom{n-2-B}{k-2}} < \frac{\dd^{T}_k(w)}{\dd^{T}_k(v)},
\end{equation*}
giving a contradiction. 

When $k = n-B-1$, note that $\dd^{T'}_k(w) = \dd^{T}_k(w) - (n-B-1)$ and $\dd^{T'}_k(v)= \dd^{T}_k(v) - (n-B-2)$. Now, note that $n-B-1 > n-B-2 + \frac{B+1}{n-1}$ implies  $\frac{n-B-1}{n-B-2} > 1 + \frac{B+1}{(n-1)(n-B-2)}$. So,  $ \frac{n-B-1}{n-B-2} > 1 + \frac{\binom{n-2}{n-B-2}}{\binom{n-1}{n-B-2}(n-B-2)}.$ However, this implies 
\begin{equation*}
\frac{\dd^{T'}_k(w)}{\dd^{T'}_k(v)} = \frac{\dd^{T}_k(w) - \binom{n-1-B}{n-2-B}}{\dd^{T}_k(v)- \binom{n-2-B}{n-3-B}} < \frac{\dd^{T}_k(w)}{\dd^{T}_k(v)},
\end{equation*}
giving a contradiction. When $k \geq n-B$, we have $\dd_k(w_1) = \dd_k(v)$ which is a contradiction as $w_1$ has less distance to $w$ than $v$. 
So, across all cases, we have $vw \in E(T)$. Thus, we know that $
\frac{\dd_k(w)}{\dd_k(v)} = \frac{\dd_k(v) + \binom{n-2}{k-1}}{\dd_k(v)} = 1 + \frac{ \binom{n-2}{k-1}}{\dd_k(v)}.$

So, we must maximize $\dd_k(v)$ to minimize $\frac{\dd_k(w)}{\dd_k(v)}$. Using Lemma \ref{Theorem: Branches are paths to maximize Steiner distance}, we have that every branch of $v$ is a path. Now, let $T$ be the graph that minimizes $\frac{\dd_k^{T}(w)}{\dd_k^{T}(v)}$ with least number of branches of $v$. Assume for contradiction that in $T$, $v$ has at least $3$ branches that do not include $w$. Let $Q_1$, $Q_2$, and $Q_3$ denote three of these branches and assume without loss of generality that $|V(Q_1)| \leq |V(Q_2)| \leq |V(Q_3)|$. If $|V(Q_3)| > 1 + \frac{n}{2}$, by repeated leaf-transfers from $Q_1$ to $Q_2$ as defined in Lemma \ref{Prep: Moving end vertices of paths} until $Q_1$ disappears,  we can create a graph $S'$. By Theorem \ref{Theorem: Steiner centroid prerequisites}, $S'$ has $v \in V(M_k(S'))$ and $\dd_k^{S'}(v) \geq \dd_k^T(v)$. This is a contradiction. If $|V(Q_3)| \leq 1 + \frac{n}{2}$, there exist leaf transfers from $Q_1$ to $Q_2$ and $Q_1$ to $Q_3$ such that $Q_1$ disappears and $|V(Q_2)| \leq 1 + \frac{n}{2}$ and $|V(Q_3)| \leq 1 + \frac{n}{2}$. Note that by Theorem \ref{Theorem: Steiner centroid prerequisites}, the graph $S''$ created by these leaf-transfers has $v  \in V(M_k(S''))$ and $\dd^{S''}_k(v) \geq \dd^T_k(v)$. Thus, we also have contradiction in this case. So, by contradiction, $v$ has at most $2$ branches that do not include $w$.

There is a unique graph upon which no leaf-transfers that preserve our established properties can be done. That graph must minimize $\frac{\dd_k(w)}{\dd_k(v)}$. By using Theorem \ref{Theorem: Steiner centroid prerequisites} to determine when a leaf-transfer will break the property that $v$ is a Steiner $k$-centroid, we find the following graphs that minimize $\frac{\dd_k(w)}{\dd_k(v)}$ for varying values of $k$. 

When $k \geq n-1$, the graph minimizing $\frac{\dd_k(w)}{\dd_k(v)}$ is a path of length $n-1$ with $w$ as a leaf and $v$ as a vertex adjacent to $w$ as shown in Figure \ref{Fig:x}. This gives $\dd_n(v) = n-1$ and $\dd_{n-1}(v) = 2(n-2) + (n-3)(n-1)$. 

\begin{figure}[htb]
\begin{center}
\begin{tikzpicture}[scale = 0.7]
\draw (0, 1) -- (2,1)-- (4,1) ;
\draw (4, 1) -- (6,1) ;
 \draw [dashed](6,1) -- (8,1) ;
\draw (8, 1) -- (10,1) ;

\fill (0, 1) circle (2pt) node[] {};
\fill (2, 1) circle (2pt) node[] {};
\fill (4, 1) circle (2pt) node[] {};
\fill (6, 1) circle (2pt) node[] {};
\fill (8, 1) circle (2pt) node[] {};
\fill (10, 1) circle (2pt) node[] {};

 \node [below] at (0, 1) {$w$};
  \node [below] at (2, 1) {$v$};

\end{tikzpicture}
\end{center}
 \caption{A graph minimizing $\frac{\dd_k(w)}{\dd_k(v)}$ when $k \geq n-1$.}
 \label{Fig:x}
\end{figure}
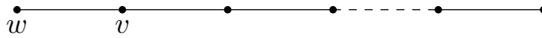

When $n$ is even and $k \leq \frac{n}{2} + 1$, the graph maximizing $\dd_k(v)$ consists of a path of length $n-2$ where $v$ is a vertex on the path of distance $\frac{n}{2}$ from a leaf of the path and a vertex $w$ vertex not on the path that is adjacent to $v$. By applying Lemma \ref{Theorem: a+b=1 path with vertex Steiner distance}, we get $\dd_k(v) = F(\frac{n}{2}, \frac{n}{2}-2)$. Otherwise, when $n$ is odd and $k \leq \frac{n}{2} + 1$, the graph maximizing $\dd_k(v)$ consists of a path of length $n-2$ where $v$ is a vertex on the path of distance $\frac{n-1}{2}$ from a leaf on the path and a vertex $w$ not on the path that is adjacent to $v$. By applying Lemma \ref{Theorem: a+b=1 path with vertex Steiner distance}, we get $\dd_k(v) = F(\frac{n-1}{2}, \frac{n-3}{2})$. When $k > \frac{n}{2} + 1$, the graph maximizing $\dd_k(v)$ consists of a path of length $n-2$ where $v$ is a vertex on the path of distance $k-1$ from a leaf on the path and a vertex $w$ not on the path that is adjacent to $v$. By applying Lemma \ref{Theorem: a+b=1 path with vertex Steiner distance} and simplifying we get $\dd_k(v) = F(k-1, n-k-1)$. This completes the proof.
\end{proof}

\subsection{Extremal ratios between the global and local functions}
\begin{theorem}\label{lemmaMaxRatio}
For a tree $T$ and a centroid $v \in V(T)$, we have $\displaystyle{ \frac{n}{k} \leq \frac{\mathsf{SW}_k(T)}{\dd_k(v)} \leq \frac{n-1}{k-1}}$. 
\end{theorem}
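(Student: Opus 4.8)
The plan is to prove the two inequalities separately; both come from double counting, and once the right identities are in place neither is hard.

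For the lower bound, I would first record the identity $\sum_{u\in V(T)}\dd_k(u)=k\,\mathsf{SW}_k(T)$: writing $\dd_k(u)=\sum_{B\ni u,\,|B|=k}\dd(B)$ and interchanging sums, each $k$-set $B$ is counted once for each of its $k$ elements. Since $v$ is a Steiner $k$-centroid, $\dd_k(v)=\min_u\dd_k(u)$, so $k\,\mathsf{SW}_k(T)=\sum_u\dd_k(u)\ge n\,\dd_k(v)$, which is exactly $\mathsf{SW}_k(T)/\dd_k(v)\ge n/k$. (Only this half uses the centroid hypothesis.)

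For the upper bound, the key local inequality is that for every $k$-set $A$ with $v\notin A$,
\[
(k-1)\,\dd(A)\;\le\;\sum_{w\in A}\dd\bigl((A\setminus\{w\})\cup\{v\}\bigr).
\]
I would prove this edge by edge. For an edge $e$, let $X_e$ (the side containing $v$) and $Y_e$ be the components of $T-e$; then $e$ contributes to $\dd(A)$ iff $A$ meets both $X_e$ and $Y_e$, while $e$ contributes to $\dd((A\setminus\{w\})\cup\{v\})$ iff $A\setminus\{w\}$ meets $Y_e$, since $v\in X_e$ always covers the $X_e$ side. A case analysis on $|A\cap Y_e|\in\{0,1,\ge 2\}$ shows that the number of $w\in A$ for which $e$ is counted on the right is $0$, $k-1$, or $k$ respectively, hence in every case at least $k-1$ times the indicator that $e$ lies in the Steiner tree of $A$; summing over all edges gives the displayed inequality.

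Finally I would sum this inequality over all $k$-sets $A$ with $v\notin A$. The left side becomes $(k-1)\bigl(\mathsf{SW}_k(T)-\dd_k(v)\bigr)$, since the $k$-sets through $v$ contribute exactly $\dd_k(v)$ to $\mathsf{SW}_k(T)$. On the right, re-index by $B=(A\setminus\{w\})\cup\{v\}$: for a fixed $k$-set $B$ with $v\in B$, the valid pairs $(A,w)$ are precisely $A=(B\setminus\{v\})\cup\{w\}$ for the $n-k$ vertices $w\notin B$, so the right side equals $(n-k)\,\dd_k(v)$. Rearranging $(k-1)\bigl(\mathsf{SW}_k(T)-\dd_k(v)\bigr)\le(n-k)\,\dd_k(v)$ gives $(k-1)\,\mathsf{SW}_k(T)\le(n-1)\,\dd_k(v)$. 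The only steps needing care are the edge-indicator case analysis and the book-keeping in the re-indexing; the rest is formal manipulation.
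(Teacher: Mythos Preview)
Your proof is correct. The lower bound is identical to the paper's: both use the identity $\sum_u \dd_k(u)=k\,\mathsf{SW}_k(T)$ and the minimality of $\dd_k(v)$ at the centroid.

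For the upper bound you take a genuinely different route. The paper argues structurally: it computes the ratio for the star, then assumes a non-star maximizer $T$ exists and performs an edge-slide $T\mapsto T'$ (moving a subtree one step closer to $v$), checking via explicit binomial computations that $\mathsf{SW}_k$ and $\dd_k(v)$ both drop but in proportions that force the ratio to strictly increase---a contradiction. Your argument is a direct double count: the edgewise inequality $(k-1)\,\dd(A)\le\sum_{w\in A}\dd((A\setminus\{w\})\cup\{v\})$, proved by the clean case split on $|A\cap Y_e|$, together with the bijective re-indexing $B=(A\setminus\{w\})\cup\{v\}$, yields $(k-1)\,\mathsf{SW}_k(T)\le(n-1)\,\dd_k(v)$ with no reference to any candidate extremal tree. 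This is shorter, avoids the somewhat delicate transformation calculus in the paper, and as you note it never uses the centroid hypothesis---so it actually proves the upper bound for \emph{every} vertex $v$, not just centroids. What the paper's approach buys is that the extremal structure (the star centered at $v$) falls out automatically; in your argument one has to go back to the edgewise cases to see that equality forces $|Y_e|=1$ for every edge $e$, which again characterizes the star, but this is an extra (easy) step rather than a by-product.
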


\begin{proof}
First, consider a star $S$ centered at $v$ with $n$ vertices. Then, note that separately considering whether or not the set of $k$ vertices contains $v$ gives $\displaystyle \mathsf{SW}_k(S) = k\binom{n-1}{k} +(k-1)\binom{n-1}{k-1}$. Thus

\begin{equation*}
\frac{\mathsf{SW}_k(S)}{\dd_k(v)} = \frac{k\binom{n-1}{k} +(k-1)\binom{n-1}{k-1}}{(k-1)\binom{n-1}{k-1}} = \frac{n-k}{k-1} + 1 = \frac{n-1}{k-1}.
\end{equation*}
Now, consider a tree T which maximizes $\frac{\mathsf{SW}_k(T)}{\dd_k(v)}$. As the ratio $\frac{\mathsf{SW}_k(T)}{\dd_k(v)} = 1$ for all trees when $k=n$, we will now consider when $k \leq n-1$. Assume for contradiction that $T$ is not a star, and let $u$ be a neighbor of $v$ with more than 1 vertex in its neighborhood. Now, let $u'$ be a neighbor of $u$. We let $B_1$ be the number of vertices in the connected component containing $u$ in $T-uu'-uv$. Also, let $B_2$ be the number of vertices in the connected component containing $u'$ in $T-uu'$. Then, we can represent $T$ as in Figure \ref{fig:minratio}.
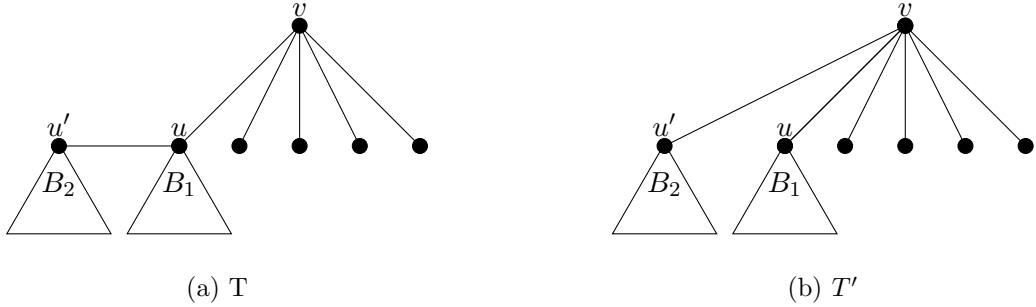
\begin{figure}[htp]
    \centering
    \begin{subfigure}[t]{0.48\textwidth}
         \centering
          \begin{center}
\begin{tikzpicture}[scale = 0.8]
\draw (1,-3) -- (3,-1) 
 node [draw=black, circle, fill=black, inner sep=2pt, pos=0]  {};
\draw (2,-3) -- (3,-1) 
node [draw=black, circle, fill=black, inner sep=2pt, pos=0]  {};
\draw (3,-3) -- (3,-1) 
node [draw=black, circle, fill=black, inner sep=2pt, pos=0]  {};
\draw (4,-3) -- (3,-1)
 node [draw=black, circle, fill=black, inner sep=2pt, pos=0]  {};
\draw  (5, -3) -- (3,-1)
 node [draw=black, circle, fill=black, inner sep=2pt, pos=1]  {}
  node [draw=black, circle, fill=black, inner sep=2pt, pos=0]  {};
\draw[color = black](1,-3) -- (-1,-3)
 node [draw=black, circle, fill=black, inner sep=2pt, pos=1]  {}
 node [draw=black, circle, fill=black, inner sep=2pt, pos=0]  {};
% \draw (1,-5) -- (0,-6.5) -- (2,-6.5) -- (1,-5);
 \node[isosceles triangle, isosceles triangle apex angle=60,
    draw,
    rotate=90,
    minimum size =1.2cm] (T60)at (1,-4){};
  \node [above] at (1,-3) {$u$};
   \node [above] at (-1,-3) {$u'$};
     \node [above] at (3,-1) {$v$};
    \node [above] at (1,-4) {$B_1$};
  \node[isosceles triangle, isosceles triangle apex angle=60,
    draw,
    rotate=90,
    minimum size =1.2cm] (T60)at (-1,-4){};
  \node [above] at (-1,-4) {$B_2$};
  
\end{tikzpicture}
\end{center}
         \caption{T}
         \label{fig:minratioT}
     \end{subfigure}
    \begin{subfigure}[t]{0.48\textwidth}
         \centering
       \begin{center}
\begin{tikzpicture}[scale = 0.8]
\draw (1,-3) -- (3,-1) 
 node [draw=black, circle, fill=black, inner sep=2pt, pos=0]  {};
\draw (2,-3) -- (3,-1) 
node [draw=black, circle, fill=black, inner sep=2pt, pos=0]  {};
\draw (3,-3) -- (3,-1) 
node [draw=black, circle, fill=black, inner sep=2pt, pos=0]  {};
\draw (4,-3) -- (3,-1)
 node [draw=black, circle, fill=black, inner sep=2pt, pos=0]  {};
\draw  (5, -3) -- (3,-1)
 node [draw=black, circle, fill=black, inner sep=2pt, pos=1]  {}
  node [draw=black, circle, fill=black, inner sep=2pt, pos=0]  {};

\draw (1,-3) -- (3,-1)
 node [draw=black, circle, fill=black, inner sep=2pt, pos=1]  {}
 node [draw=black, circle, fill=black, inner sep=2pt, pos=0]  {};
 \draw (-1,-3) -- (3,-1)
 node [draw=black, circle, fill=black, inner sep=2pt, pos=1]  {}
 node [draw=black, circle, fill=black, inner sep=2pt, pos=0]  {};
% \draw (1,-5) -- (0,-6.5) -- (2,-6.5) -- (1,-5);
 \node[isosceles triangle, isosceles triangle apex angle=60,
    draw,
    rotate=90,
    minimum size =1.2cm] (T60)at (1,-4){};
  \node [above] at (1,-4) {$B_1$};
    \node [above] at (1,-3) {$u$};
   \node [above] at (-1,-3) {$u'$};
     \node [above] at (3,-1) {$v$};
  \node[isosceles triangle, isosceles triangle apex angle=60,
    draw,
    rotate=90,
    minimum size =1.2cm] (T60)at (-1,-4){};
  \node [above] at (-1,-4) {$B_2$};
\end{tikzpicture}
\end{center}
         \caption{$T'$}
         \label{fig:minratioT'}
     \end{subfigure}
     
     \caption{A representation of a tree $T$ and $T'$}
    \label{fig:minratio}
\end{figure}

Now consider another tree $T' = T - uv' + vu'$ and note that by Theorem \ref{Theorem: Steiner centroid prerequisites}, $v$ is still a Steiner $k$-centroid in $T'$. We have 
\begin{align*}
\mathsf{SW}_k(T') &= \mathsf{SW}_k(T) - \left( \binom{n-1  }{k} - \binom{B_1}{k} -\binom{n - 1 - B_1}{k}\right) + \left(\binom{B_1 + B_2}{k-1} -\binom{B_1}{k} -\binom{B_2}{k}\right)\\
&= \mathsf{SW}_k(T) - \binom{n-1}{k} +\binom{n-1-B_1}{k} +\binom{B_1 + B_2}{k} -\binom{B_2}{k}.
\end{align*}
Also, we have
\begin{equation*}
    \dd_k^{T'}(v) = \dd_k^T(v) - \binom{n-2}{k-1} + \binom{n-2+B_1}{k-1}.
\end{equation*}
Now, note that
\begin{align*}
     \frac{\binom{n-1}{k} - \binom{n-1-B_1}{k} - \binom{B_1 + B_2}{k} + \binom{B_2}{k}}{\binom{n-2}{k-1} - \binom{n-2-B_1}{k-1}}
    & < \frac{(n-1)\cdots(n-k) - (n-1-B_1)\cdots(n-k-B_1)}{k \left( (n-2)\cdots(n-k) - (n-2-B_1)\cdots(n-k-B_1)\right)} \\
   & < \frac{(n-1)\cdots(n-k) - (n-1)(n-2-B_1)\cdots(n-k-B_1)}{k \left( (n-2)\cdots(n-k) - (n-2-B_1)\cdots(n-k-B_1)\right)} \\
    & = \frac{n-1}{k}\\
    & < \frac{n-1}{k-1}.
\end{align*}
Since $\frac{\mathsf{SW}_k(T)}{\dd^T_k(v)} \geq \frac{n-1}{k-1}$,
\begin{equation*}
\frac{\mathsf{SW}_k(T')}{\dd_k^{T'}(v)} > \frac{\mathsf{SW}_k(T)}{\dd^T_k(v)}, 
\end{equation*}
which is a contradiction.
Thus, we know that $T$ is a star and by our previous calculations, $\frac{\mathsf{SW}_k(T)}{\mathsf{d}^T_k(v)} = \frac{n-1}{k-1}.$ 

Now, to establish a lower bound, consider an arbitrary tree $T$ and denote its vertices as $v_1, v_2, \dots, v_n$. We note that $\displaystyle{{\mathsf{SW}_k(T) = \frac{1}{k}\sum_{i=1}^{n}\mathsf{d}_k(v_i)}}$. This implies that $\frac{\mathsf{SW}_k(T)}{\mathsf{d}^T_k(v)} \geq \frac{n}{k}$. 
\end{proof}

\section{Concluding Remarks}
We studied a number of questions related to the Steiner $k$-distance and the corresponding middle parts and extremal ratios in trees. These results generalize previous work on regular distances. 

Some natural questions arise from our study of the extremal bounds, where we wish to maximize the function $F$ in Theorems~\ref{Theorem: Bounds for ratio of leafs} and ~\ref{Theorem: Bounds for internal vertices}.

Bounding $\frac{\mathsf{SW}_k(T)}{\dd_k(w)}$ where $w \in V(M_k(T))$ is another logical next step for this paper. With such bounds and the corresponding extremal structures, comparisons can be made with other topological indices.


\begin{thebibliography}{10}

\bibitem{barefoot97}
C.A. Barefoot, R.C. Entringer, and L.A. Székely.
\newblock Extremal values for ratios of distances in trees.
\newblock {\em Discrete Applied Mathematics}, 80(1):37--56, 1997.

\bibitem{berineke1996}
L.~W. Beineke, O.~R. Oellermann, and R.~E. Pippert.
\newblock On the steiner median of a tree.
\newblock {\em Discrete Applied Mathematics}, 68(3):249--258, 1996.

\bibitem{Chartrand1989}
G.~Chartrand, O.~R. Oellermann, S.~Tian, and H.~Zou.
\newblock Steiner distance in graphs.
\newblock {\em Časopis pro pěstování matematiky}, 114(4):399--410, 1989.

\bibitem{dankemann04}
P.~Dankelmann, W.~Goddard, and C.~Swart.
\newblock The average eccentricity of a graph and its subgraphs.
\newblock {\em Utilitas Mathematica}, 65, 05 2004.

\bibitem{Dankelmann96}
P.~Dankelmann, O.~Oellermann, and H.~Swart.
\newblock The average steiner distance of a graph.
\newblock {\em Journal of Graph Theory - JGT}, 22:15--22, 05 1996.

\bibitem{Dankelmann97}
P.~Dankelmann, H.~Swart, and O.~Oellermann.
\newblock On the average steiner distance of graphs with prescribed properties.
\newblock {\em Discrete Applied Mathematics}, 79:91--103, 11 1997.

\bibitem{gutman16}
I.~Gutman, X.~Li, and Y.~Mao.
\newblock The steiner wiener index of a graph.
\newblock {\em Discussiones Mathematicae Graph Theory}, 36, 05 2016.

\bibitem{Jordan1869}
C.~Jordan.
\newblock Sur les assemblages de lignes.
\newblock {\em Journal für die reine und angewandte Mathematik}, 70:185--190,
  1869.

\bibitem{Rasila21}
V.~A. Rasila and A.~Vijayakumar.
\newblock Bounds on the steiner wiener index of graphs.
\newblock {\em Discrete Mathematics, Algorithms and Applications}, 14, 08 2021.

\bibitem{szekely2013}
L.~Székely and H.~Wang.
\newblock Extremal values of ratios: Distance problems vs. subtree problems in
  trees.
\newblock {\em The Electronic Journal of Combinatorics}, 20:P67, 03 2013.

\bibitem{szekely14}
L.~Székely and H.~Wang.
\newblock Extremal values of ratios: Distance problems vs. subtree problems in
  trees ii.
\newblock {\em Discrete Mathematics}, 322:36–47, 05 2014.

\bibitem{Wang2015}
H.~Wang.
\newblock Centroid, leaf-centroid, and internal-centroid.
\newblock {\em Graphs and Combinatorics}, 31(3):783--793, May 2015.

\bibitem{winer47}
H.~Winer.
\newblock Structural determination of paraffin boiling points.
\newblock {\em Journal of the American Chemical Society}, 69:17--20, 01 1947.

\bibitem{zhang19}
J.~Zhang, G.~Zhang, H.~Wang, and X.~Zhang.
\newblock On extremal trees with respect to the steiner wiener index.
\newblock {\em Discrete Mathematics, Algorithms and Applications}, 11, 09 2019.

\end{thebibliography}
\end{document}